\newtheorem{theorem}{Theorem}[section]
\newtheorem{corollary}[theorem]{Corollary}
\newtheorem{proposition}[theorem]{Proposition}
\newtheorem{lemma}[theorem]{Lemma}
\theoremstyle{definition}
\theoremstyle{definition}
\newtheorem{remark}[theorem]{Remark}
\newcommand\HFK{{\rm {HFK}}}
\newcommand\HFKh{{\rm {\widehat{HFK}}}}
\newcommand\HFKm{{\rm {HFK^{-}}}}
\newcommand\CFKh{{\rm {\widehat{CFK}}}}
\newcommand\CFKm{{\rm {CFK^{-}}}}
\newcommand\alphas{\mbox{\boldmath$\alpha$}}
\newcommand\betas{\mbox{\boldmath$\beta$}}
\newcommand\Sym{{\rm {Sym}}}
\newcommand\ws{\mathbf w}
\newcommand\zs{\mathbf z}
\newcommand\FF{\mathbb F}
\newcommand\M{\mathcal M}
\newcommand\TT{\mathbb{T}}
\newcommand\tori{\TT_{\alpha} \cap \TT_{\beta}}
\newcommand\SH{\mathcal{H}}
\newcommand\rk{\text{rk }}
\def\x{\mathbf{x}}
\def\y{\mathbf{y}}
\def\z{\mathbf{z}}
\def\w{\mathbf{w}}
\newcommand{\mr}{\mathrm}
\newcommand{\cM}{\mathcal{M}}
\newcommand{\CC}{\mathbb{C}}
\newcommand{\RR}{\mathbb{R}}
\newcommand{\del}{\partial}
\newcommand{\ZZ}{\mathbb{Z}}
\newcommand{\cL}{\mathcal{L}}
\newcommand{\cJ}{\mathcal{J}}
\newcommand{\DD}{\mathbb{D}}
\begin{document}

\title[{Twisting, mutation and knot Floer homology}]{Twisting, mutation and knot Floer homology}

\author[P. Lambert-Cole]{Peter Lambert-Cole}
\address{Department of Mathematics \\ Indiana University}
\email{pblamber@indiana.edu}
\urladdr{\href{https://www.pages.iu.edu/~pblamber/}{https://www.pages.iu.edu/\~{}pblamber}}

\keywords{Heegaard Floer homology, mutation}
\subjclass[2010]{57M27; 57R58}
\maketitle


\begin{abstract}
Let $\cL$ be a knot with a fixed positive crossing and $\cL_n$ the link obtained by replacing this crossing with $n$ positive twists.  We prove that the knot Floer homology $\HFKh(\cL_n)$ `stabilizes' as $n$ goes to infinity.  This categorifies a similar stabilization phenomenon of the Alexander polynomial.  As an application, we construct an infinite family of prime, positive mutant knots with isomorphic bigraded knot Floer homology groups.  Moreover, given any pair of positive mutants, we describe how to derive a corresponding infinite family of positive mutants with isomorphic bigraded $\HFKh$ groups, Seifert genera, and concordance invariant $\tau$.

\end{abstract}

\section{Introduction} 
\label{sec:introduction}

An interesting open question is the relationship between mutation and knot Floer homology.  While many knot polynomials and homology theories are insensitive to mutation, the bigraded knot Floer homology groups can detect mutation \cite{OS-mutation} and genus 2 mutation \cite{Moore-Starkston}.  Conversely, explicit computations \cite{BG-computations} and a combinatorial formulation \cite{BL-spanning} suggest that the $\delta$-graded $\HFKh$ groups are mutation-invariant.

In this paper, we construct an infinite family of prime, positive mutants whose $\HFKh$ groups agree.  Recall that a mutation is {\it positive} if the mutant links admit compatible orientations.

\begin{theorem}
\label{thrm:pos-mutants-hfk}
There exist an infinite family positively mutant knots $\{KT_n,C_n\}$ such that 
\begin{enumerate}
\item for all $n$, the knots $KT_n$ and $C_n$ are not isotopic,
\item for all $|n| \gg 0$, the knots $KT_n,C_n$ are prime, and
\item for all $|n| \gg 0$, there is a bigraded isomorphism
\[\HFKh(KT_n) \cong \HFKh(C_n)\]
\end{enumerate}
\end{theorem}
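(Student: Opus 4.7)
The plan is to realize the family explicitly as twists on a concrete pair of positive mutants. Take the Kinoshita--Terasaka knot $KT$ and the Conway knot $C$: both are prime and positively mutant across a Conway sphere, and one can locate a positive crossing lying outside the mutation sphere together with its image in the mutant under the natural identification of the exterior tangles. Let $KT_n,C_n$ be the knots obtained by replacing each of these crossings with $n$ positive full twists. Because the twist region sits outside the Conway sphere, the tangle surgery commutes with Conway mutation, so $KT_n$ and $C_n$ form a positively mutant pair for every integer $n$.

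The bigraded isomorphism in part (3) is the direct payoff of the stabilization theorem announced in the abstract applied to both twist families. Each of $\HFKh(KT_n)$ and $\HFKh(C_n)$ eventually coincides with a stable limit determined by the exterior of the twist region together with the oriented resolutions at the twisted crossing. Because that crossing lies outside the Conway sphere, the relevant exterior data for $KT$ and the corresponding data for $C$ are identified by the mutation homeomorphism; so running the stabilization on each family produces stable limits built from the same input, and positivity of the mutation then ensures these limits are isomorphic as bigraded groups. Non-isotopy in part (1) is obtained from a twist-compatible invariant that separates $KT$ from $C$ (for instance Khovanov homology, or a signature computed on a fixed genus-two Seifert surface extended across the twists), and primality in part (2) reduces to the classical fact that a nontrivial twist family contains only finitely many composites.

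The main obstacle is in the last step: the stabilization theorem by itself identifies the stable groups only abstractly, so one must verify that the mutation-induced identification of the exterior data actually lifts to a \emph{bigraded} isomorphism of the stable groups, not merely a $\delta$-graded one (which is all that is conjecturally available for mutation of $\HFKh$ in general). The plan is to work with a Heegaard diagram adapted to the twist region so that the stable portion of the chain complex is computed from generators lying entirely outside the mutation sphere; positivity of the mutation then forces both the Maslov and Alexander gradings to be preserved by the induced chain-level identification.
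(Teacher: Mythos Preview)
Your outline has a genuine gap in part (3), and a secondary problem in part (1).

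For part (3), you propose to run the stabilization theorem separately on the two twist families and then argue that the stable limits agree because ``the relevant exterior data for $KT$ and the corresponding data for $C$ are identified by the mutation homeomorphism.'' But this is exactly the statement you cannot assume. The stable form in Theorem~\ref{thrm:stabilization} is built from the knot Floer homology of links obtained by resolving the twisted crossing (equivalently, from $\HFKh$ of $\cL_{n_0}$ for some fixed $n_0$), and for the $KT$ and $C$ families these resolved links are \emph{mutants}, not isotopic links. Since bigraded $\HFKh$ is not known to be mutation-invariant --- and indeed $\HFKh(KT)\not\cong\HFKh(C)$ --- you have no way to match the two stable limits. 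Your proposed fix, localizing generators ``entirely outside the mutation sphere,'' does not make sense for Heegaard Floer generators, which are global tuples in $\TT_\alpha\cap\TT_\beta$; there is no chain-level identification induced by mutation that one could hope to check is grading-preserving.

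The paper's argument avoids this trap by a different mechanism. Writing the mutant pair as $\cL_{n,0}$ and $\cL_{n+1,-1}$ (twists on both sides of the Conway sphere), one observes that a flype gives an \emph{isotopy} $\cL_{k,l}\simeq\cL_{k+2,l-2}$. The stabilization result is used only to show (Lemma~\ref{lemma:skein-maximal}) that for $|n|\gg 0$ the skein triangle splits, yielding
\[
\HFKh(\cL_{n,0})\cong\HFKh(\cL_{n-1,0})[1]\oplus\HFKh(\cL_{n+1,0}),\qquad
\HFKh(\cL_{n+1,-1})\cong\HFKh(\cL_{n+1,-2})[1]\oplus\HFKh(\cL_{n+1,0}).
\]
The flype isotopy $\cL_{n+1,-2}\simeq\cL_{n-1,0}$ then matches the summands as \emph{isotopic} links, not as mutants, and the bigraded isomorphism follows. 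This is the key idea your proposal is missing.

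For part (1), Khovanov homology over $\FF_2$ is mutation-invariant (Wehrli, Bloom), so it cannot distinguish $KT_n$ from $C_n$; your signature suggestion is too vague to evaluate. The paper instead uses the Cochran--Ruberman tangle invariants $I^i$, showing $I^2(T_2)\neq 0$ obstructs any isotopy that would have to exchange the arcs of a constituent tangle. For part (2), your ``finitely many composites'' claim is essentially the hyperbolic Dehn surgery argument the paper uses (surgery on an unknotted circle encircling the twist strands), but you should state it that way rather than as an unspecified classical fact.
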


The families $\{KT_n\}$ and $\{C_n\}$ are constructed from the Kinoshita-Teraska and Conway knots, respectively, by adding $n$ full twists to the knots just outside the mutation sphere.  Note that while $KT$ and $C$ are distinguished by $\HFKh$ \cite{OS-mutation, BG-computations} and their genera \cite{Gabai-genera}, adding sufficiently many twists forces their bigraded knot Floer groups (and therefore genera) to coincide.  To distinguish $KT_n$ and $C_n$, we use tangle invariants introduced by Cochran and Ruberman \cite{Cochran-Ruberman}.  

The bigraded mutation invariance of $\HFKh$ follows from a more general fact.  In a particular sense, `most' positive mutants have isomorphic knot Floer homology groups over $\FF_2$.  Moreover, we can prove that `most' positive mutants have the same concordance invariant $\tau$.

\pagebreak

\begin{theorem}
\label{thrm:mutants-isom-hfk}
Let $\cL$ be an oriented knot with positive mutant $\cL'$.  Let $\cL_n, \cL'_n$ denote the mutant links obtained by adding $n$ half-twists along parallel-oriented strands of $\cL$ just outside the mutation sphere.  Then for $|n| \gg 0$ there is a bigraded isomorphism
\[\HFKh(\cL_n) \cong \HFKh(\cL'_n)\]
Moreover, for $|n| \gg 0$,
\begin{align*}
g(\cL_{2n}) &= g(\cL'_{2n}) \\
\tau(\cL_{2n}) &= \tau(\cL'_{2n})
\end{align*}
where $g$ denotes the Seifert genus and $\tau$ denotes the knot Floer concordance invariant.
\end{theorem}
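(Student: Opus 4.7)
The plan is to reduce the theorem to the main stabilization result for $\HFKh$ under positive twisting (the central technical result of this paper). First, by the stabilization theorem applied to both $\cL$ and $\cL'$ twisted along the same parallel-oriented strands, there are bigraded isomorphisms
\[ \HFKh(\cL_n) \cong \HFKh^\infty(\cL), \qquad \HFKh(\cL'_n) \cong \HFKh^\infty(\cL') \]
for $|n| \gg 0$, where $\HFKh^\infty$ denotes the stable limit (modulo a universal $n$-dependent grading shift that is the same for $\cL$ and $\cL'$, since the twist region agrees). The stabilized groups are computed from the links obtained by replacing the twist region with a standard ``resolved'' local model.

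Next, I need to identify $\HFKh^\infty(\cL) \cong \HFKh^\infty(\cL')$ as bigraded vector spaces. The crucial geometric input is that the twist region lies outside the mutation sphere, so the resolution defining the stabilized limit commutes with the mutation. The resolved link for $\cL'$ is thus the positive mutant of the resolved link for $\cL$ with respect to the same mutation sphere. Choosing the resolution so that the two twisted strands become ``decoupled'' from the rest of the link---for instance, so that they are separated into a standalone split component---one can arrange the resolved link to admit a separating sphere that isolates the mutation tangle. This reduces bigraded mutation invariance of $\HFKh^\infty$ to a K{\"u}nneth-type decomposition (for connected sums) or to a splitting argument in the tangle Floer framework.

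For the Seifert genus and concordance invariant $\tau$, the bigraded isomorphism $\HFKh(\cL_{2n}) \cong \HFKh(\cL'_{2n})$ for $|n| \gg 0$ immediately implies $g(\cL_{2n}) = g(\cL'_{2n})$, via Ozsv{\'a}th--Szab{\'o}'s detection of Seifert genus by the top nonvanishing Alexander grading of $\HFKh$. For $\tau$, the stabilization must be upgraded to the filtered complex $\CFKh$ (or to the minus flavor), from which $\tau$ is extracted; the same geometric argument then yields $\tau(\cL_{2n}) = \tau(\cL'_{2n})$ in the stable range.

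The main obstacle will be step two: establishing that $\HFKh^\infty$ is \emph{bigraded} mutation-invariant. Since $\HFKh$ itself is generally only $\delta$-graded mutation-invariant (and the paper itself shows that bigraded mutation invariance can fail), the bigraded identification in the limit must be forced by special features of the stabilized invariant---most plausibly by producing a geometric splitting of the resolved link that isolates the mutation tangle in a factor whose bigraded $\HFKh$ does respect mutation, so that the dominant contribution to the stable limit is decoupled from the mutation data.
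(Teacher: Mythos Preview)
Your proposal has a genuine gap at exactly the point you flag as ``the main obstacle'': step two is not an argument but a hope. The stabilization theorem does not produce a single limiting group $\HFKh^\infty$; rather, it says that $\HFKh(\cL_n)$ has the form $\widehat{F}_\circ \oplus (\text{copies of }\widehat{A},\widehat{B}) \oplus \widehat{F}_\bullet$ with the number of middle summands growing with $n$. These pieces are defined as $\HFKh(\cL_{n_0},j)$ for various $j$ and a fixed large $n_0$, not as the Floer homology of any ``resolved'' link. There is no geometric limit link to which you could apply a K\"unneth or tangle-Floer decomposition, and even if there were, you would be back to proving bigraded mutation invariance of $\HFKh$ for that link---which is the very phenomenon the paper shows can fail.

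The paper's argument avoids this entirely via a flype trick you are missing. Write $\cL_{k,l}$ for the link with $k$ half-twists above and $l$ half-twists below the mutation sphere. Two flypes give an isotopy $\cL_{k,l}\cong\cL_{k+2,l-2}$, while the positive mutant of $\cL_{k,l}$ is $\cL_{k+1,l-1}$. The key consequence of stabilization (Lemma~\ref{lemma:skein-maximal}) is that for $|n|\gg 0$ the skein triangle for $(\cL_{n+1},\cL_{n-1},\cL_n)$ \emph{splits}. Applying this once at the top twist region and once at the bottom gives
\[
\HFKh(\cL_{n,0})\cong \HFKh(\cL_{n-1,0})[1]\oplus\HFKh(\cL_{n+1,0}),\qquad
\HFKh(\cL_{n+1,-1})\cong \HFKh(\cL_{n+1,-2})[1]\oplus\HFKh(\cL_{n+1,0}),
\]
and the flype isotopy $\cL_{n-1,0}\cong\cL_{n+1,-2}$ makes the right-hand sides identical. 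No mutation invariance of any auxiliary Floer group is needed. The genus statement then follows from the bigraded isomorphism exactly as you say. For $\tau$, the paper does \emph{not} upgrade stabilization to the filtered complex; instead it uses merge-cobordism inequalities to pin down the $\tau$-set of the intermediate $2$-component link $L_n=\cL_{2n+1,0}$, which bounds elementary cobordisms both to $\cL_{2n,0},\cL_{2n+2,0}$ and (via the flype again) to $\cL'_{2n,0},\cL'_{2n+2,0}$, forcing the stable values of $\tau$ to agree.
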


The proof of Theorem \ref{thrm:mutants-isom-hfk} follows from a second result we prove in this paper.  We show that knot Floer homology categorifies the following phenomenon for the Alexander polynomial.  Let $\cL \subset S^3$ be a link with a positive crossing.  For a positive integer $n \in \ZZ$, let $\cL_n$ be the link obtained by replacing the crossing with $n$ positive twists.  As $n$ goes to infinity, the Alexander polynomial of $\cL_n$ stabilizes to the form
\[\Delta_{\cL_n}(t) = t^{\frac{n-k}{2}} \cdot f(t)+ d \cdot \Delta_{T(2,n-k)}(t) +  t^{-\frac{n-k}{2}} \cdot f(t^{-1}) \]
for some integers $k,d$ and some polynomial $f(t) \in \ZZ[\sqrt{t}]$.  In this paper, we show that the knot Floer homology of $\cL_n$ stabilizes in a similar fashion as $n$ goes to infinity.

\begin{theorem}
\label{thrm:stabilization}
Let $\cL = \cL_1$ be an oriented knot with a positive crossing and let $\cL_n$ be the link obtained by replacing this crossing with $n$ positive half-twists.  Without loss of generality, assume that at the crossing, the two strands belong to the same component of $\cL_1$.

\begin{enumerate}
\item There exists some $k > 0$ such that for $|n|$ sufficiently large, the knot Floer homology of $\cL_n$ satisfies
\begin{align*}
\HFKh(\cL_n,j) &\cong \HFKh(\cL_{n+2},j+1) && \text{for $j \geq -k$}\\
\HFKh(\cL_{n},j) &\cong \HFKh(\cL_{n+2},j-1)[2] && \text{for $j \leq k$}
\end{align*}
where $[i]$ denotes shifting the Maslov grading by $i$.

\item There exists a positive integer $k$ and bigraded vector spaces $\widehat{F}_{\circ},\widehat{F}_{\bullet},\widehat{A},\widehat{B}$ such that for odd $n$ sufficiently large
\begin{align*}
\label{formula:hfk-stabilized}
\HFKh(\cL_n) &\cong \widehat{F}_{\circ}[2(n-k),(n-k)] \oplus \bigoplus_{i = k-n}^{0} \widehat{A}[2(n-k) + 2i, (n-k)+2i] \\
& \, \, \oplus \bigoplus_{i = k-n+1}^{0} \widehat{B}[2(n-k) + 2i - 1,(n-k)+2i -1] \oplus \widehat{F}_{\bullet}[0,(k-n)]
\end{align*}
where $[i,j]$ denotes shifting the homological grading by $i$ and Alexander grading by $j$ and 
\begin{enumerate}
\item $\widehat{F}_{\bullet}$ (resp. $\widehat{F}_{\circ}$) is supported in nonnegative (resp. nonpositive) Alexander gradings,
\item $\widehat{A},\widehat{B}$ are supported in Alexander grading 0.
\end{enumerate}

\item For even $n$ sufficiently large
\begin{align*}
\HFKh(\cL_n) &\cong \HFKh(\cL_{n+1}) \oplus \HFKh(\cL_{n-1})[1]
\end{align*}
where the summands on the right are described in Part (2) and $[i]$ denotes shifting the Maslov grading.
\end{enumerate}
\end{theorem}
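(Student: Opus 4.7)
The plan is to use the oriented skein exact triangle for hat knot Floer homology as the main tool, categorifying the skein relation $\Delta_{L_+} - \Delta_{L_-} = (t^{1/2} - t^{-1/2})\,\Delta_{L_0}$ from which the stated Alexander polynomial stabilization already follows by a direct recursion. For any $n$, pick a crossing within the twist region of $\cL_{n+2}$; its three resolutions are $\cL_{n+2}, \cL_n, \cL_{n+1}$. The skein triangle gives
\[
\cdots \to \HFKh(\cL_{n+2}) \to \HFKh(\cL_n) \to \HFKh(\cL_{n+1}) \otimes W \to \cdots
\]
with prescribed Alexander and Maslov grading shifts, where $W$ is $\FF_2$ or $\FF_2 \oplus \FF_2$ according to whether the two strands at the crossing lie in the same or different components of $\cL_{n+1}$. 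This case distinction, dictated by the parity of $n$, is what separates the odd and even behaviors in the theorem.

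I would prove Parts (1), (2), and (3) together by coupled induction on $n$, alternating between odd and even parity. The base case is provided by $\cL_{n_0}$ for some sufficiently small $n_0$, from which the pieces $\widehat{F}_\circ$, $\widehat{F}_\bullet$, $\widehat{A}$, $\widehat{B}$ can be read off. In the inductive step for $\cL_{n+2}$, apply the triangle above, using the stable decompositions already established for $\cL_n$ and $\cL_{n+1}$. In Alexander gradings $j \geq -k$ (above the middle range), the only contribution to the middle term of the triangle comes from the already-stable tail $\widehat{F}_\circ$; a grading count combined with the Euler characteristic constraint from the Alexander polynomial stabilization forces the connecting maps to vanish in this range, yielding the isomorphism $\HFKh(\cL_n, j) \cong \HFKh(\cL_{n+2}, j+1)$ of Part (1), with the dual statement for $j \leq k$ (carrying the Maslov shift $[2]$) following symmetrically via $\widehat{F}_\bullet$. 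In the middle range, the contribution of $\HFKh(\cL_{n+1}) \otimes W$ supplies exactly two new staircase steps -- one copy each of $\widehat{A}$ and $\widehat{B}$ at the appropriate shifted bigradings -- extending the pattern in Part (2). For even $n$, the specific form of $W$ together with the vanishing of certain connecting maps (forced by the disjoint bidegree supports of $\HFKh(\cL_{n-1})$ and $\HFKh(\cL_{n+1})$ in the stable range) causes the triangle to split, producing the direct sum decomposition of Part (3).

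The main obstacle is pinning down the exact staircase structure of the middle piece in Part (2): one must verify that iterating the skein triangle reproduces precisely the alternating $\widehat{A}, \widehat{B}$ pattern, rather than some more complicated arrangement. The Alexander polynomial stabilization provides Euler characteristic constraints that substantially restrict the possibilities, and the Alexander grading symmetry of $\HFKh$ closes most remaining gaps; the delicate step is controlling all connecting homomorphisms uniformly and verifying that no spurious generators appear or cancel as $n$ advances by two. A secondary difficulty is handling the two forms of $W$ coherently across the parity cases so that the induction closes on itself, and keeping track of the Maslov shifts that distinguish the two halves of Part (1).
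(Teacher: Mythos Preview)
Your proposal has a genuine gap: the skein exact triangle together with Euler characteristic constraints is not enough to establish Part~(1), and the paper does not attempt to do so that way.

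The paper's key technical input is geometric rather than homological. It builds explicit multipointed Heegaard diagrams $\SH_n$ for $\cL_n$ from a bridge presentation, arranged so that $\SH_{n+2}$ is obtained from $\SH_n$ by a single negative Dehn twist along a curve $\gamma$ encircling two $z$-basepoints. This gives a set injection on generators $\mathfrak{G}(\SH_n)\hookrightarrow\mathfrak{G}(\SH_{n+2})$, and the heart of the argument (Lemmas~2.4--2.5 and Proposition~2.6) is to show that on the subcomplex $\CFKh(\SH_n)_{\leq 1}$ this injection is a \emph{chain-level isomorphism} onto $\CFKh(\SH_{n+2})_{\leq 0}$. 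The differential is matched by invoking the Localization Principle to identify moduli spaces of holomorphic disks whose domains avoid $\gamma$. Only after this isomorphism is in hand does the paper use the skein triangle, and then just to pin down the Maslov shift as $-2$. Part~(2) is then a formal repackaging of Part~(1), and Part~(3) follows from a separate rank argument (Lemma~2.8) that uses Part~(1) to force the skein triangle to split.

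Your plan tries to run the logic in reverse: deduce Part~(1) from the skein triangle via induction, using Euler characteristics to kill connecting maps. This fails for two reasons. First, Euler characteristic alone cannot force a connecting homomorphism to vanish---it constrains the alternating sum of ranks, not the ranks themselves, so nothing prevents extra cancelling pairs from appearing in $\HFKh(\cL_{n+2})$. Second, your induction is circular: to control $\HFKh(\cL_{n+2})$ you need the stable form for both $\HFKh(\cL_n)$ and $\HFKh(\cL_{n+1})$, but there is no independent base case that supplies this for two consecutive values of $n$. The paper's chain-level argument is precisely what breaks this circularity by providing rank equalities (not just Euler characteristic equalities) directly, without any inductive hypothesis.
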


A key consequence (Lemma \ref{lemma:skein-maximal}) of Theorem \ref{thrm:stabilization} is that, for $|n|$ sufficiently large, the skein exact triangle for the triple $(\cL_{n+1},\cL_{n-1},\cL_n)$ splits.  We apply this observation to prove Theorem \ref{thrm:mutants-isom-hfk}.

\subsection{Acknowledgements}

I would like to thank Matt Hedden, Adam Levine, Tye Lidman, Allison Moore and Dylan Thurston for their interest and encouragement.  I would also like to thank the organizers of the "Topology in dimension 3.5" conference, as well as Kent Orr, for helping me learn about Tim Cochran's work.

\section{Twisting and knot Floer homology}

Throughout this section, let $\cL_1 \subset S^3$ be an oriented link with a distinguished positive crossing and for any $n \in \ZZ$ let $\cL_n$ denote the link obtained by replacing this crossing with $n$ half twists.

\subsection{Alexander polynomial}

Let $\Delta_{\cL}(t)$ denote the symmetrized Alexander polynomial of $\cL$ and let $\Delta_k(t)$ denote the symmetrized Alexander polynomial of the $(2,k)$-torus link.

\begin{proposition}
\label{prop:twist-Alex-relation}
The Alexander polynomial of $\cL_n$ satisfies the relation
\[ \Delta_{\cL_n} = \Delta_{n+1} \Delta_{\cL_{0}} + \Delta_{n} \Delta_{\cL_{-1}}\]
\end{proposition}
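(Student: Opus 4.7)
The plan is to derive the identity from the oriented Alexander skein relation and a pair of matching three-term recurrences. Recall that the symmetrized Alexander polynomial satisfies
\[\Delta_{\cL_+}(t) - \Delta_{\cL_-}(t) = (t^{1/2} - t^{-1/2})\, \Delta_{\cL_0}(t).\]
Applying this at one crossing inside the twist region of $\cL_{n+1}$, flipping that positive crossing cancels with a neighboring one and yields $\cL_{n-1}$, while the oriented resolution yields $\cL_n$. This gives
\[\Delta_{\cL_{n+1}} - \Delta_{\cL_{n-1}} = (t^{1/2} - t^{-1/2})\, \Delta_{\cL_n}.\]
Applying the same skein relation to the $(2,n+1)$-torus link yields the analogous recurrence
\[\Delta_{n+1} - \Delta_{n-1} = (t^{1/2} - t^{-1/2})\, \Delta_n.\]

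Now set $b_n := \Delta_{n+1}\, \Delta_{\cL_0} + \Delta_n\, \Delta_{\cL_{-1}}$. Substituting the torus-link recurrence into the definition of $b_n$ shows directly that
\[b_{n+1} - b_{n-1} = (t^{1/2} - t^{-1/2})\, b_n,\]
so $b_n$ satisfies exactly the same linear three-term recurrence as $\Delta_{\cL_n}$. Since any solution of such a recurrence is determined by two consecutive values, it then suffices to check $b_n = \Delta_{\cL_n}$ at $n = 0$ and $n = -1$, after which induction on $|n|$ in both directions finishes the argument.

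For the base cases, the relevant initial values of $\Delta_n$ are $\Delta_1 = 1$ (unknot), $\Delta_0 = 0$ (two-component unlink), and $\Delta_{-1} = 1$ (unknot). Therefore
\[b_0 = \Delta_1\, \Delta_{\cL_0} + \Delta_0\, \Delta_{\cL_{-1}} = \Delta_{\cL_0}, \qquad b_{-1} = \Delta_0\, \Delta_{\cL_0} + \Delta_{-1}\, \Delta_{\cL_{-1}} = \Delta_{\cL_{-1}},\]
as required.

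There is no real obstacle beyond careful bookkeeping. The only thing to verify is that the orientation conventions on $\cL_{n+1}$, $\cL_n$, and $\cL_{n-1}$ are compatible at the distinguished crossing, so that the oriented skein relation applies without sign ambiguity, and that the number of components may jump between $\cL_n$ and $\cL_{n \pm 1}$ in the same way that it does between $T(2,n)$ and $T(2,n \pm 1)$. This parallelism between the two families is precisely what allows the formula for $b_n$ to propagate the recurrence, and the rest is a short symmetric induction from the two base cases.
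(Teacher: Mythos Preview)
Your proof is correct and essentially the same as the paper's. Both arguments verify the base cases $n=0,-1$ using $\Delta_1=1$, $\Delta_0=0$, $\Delta_{-1}=1$, and then induct via the oriented skein relation $\Delta_{\cL_{n+1}} = \Delta_2\,\Delta_{\cL_n} + \Delta_{\cL_{n-1}}$ (equivalently your $\Delta_{\cL_{n+1}} - \Delta_{\cL_{n-1}} = (t^{1/2}-t^{-1/2})\Delta_{\cL_n}$, since $\Delta_2 = t^{1/2}-t^{-1/2}$); the only difference is cosmetic, in that you phrase the induction as ``two sequences satisfying the same linear recurrence with matching initial data'' while the paper substitutes the inductive hypothesis directly and simplifies using the torus-link recurrence.
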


\begin{proof}
The formula is correct when $n = 0,-1$ since
\begin{align*}
\Delta_{\cL_0} &= \Delta_1 \Delta_{\cL_0} + \Delta_0 \Delta_{\cL_{-1}}  & \Delta_{\cL_{-1}} &= \Delta_0 \Delta_{\cL_0} + \Delta_{-1} \Delta_{\cL_{-1}} \\
&= \Delta_{\cL_0} & &= \Delta_{\cL_{-1}}
\end{align*}
For positive $n$, the formula follows inductively by using the oriented skein relation for the Alexander polynomial:
\begin{align*}
\Delta_{\cL_{n+1}} &= \Delta_2 \cdot \Delta_{\cL_n} + \Delta_{\cL_{n-1}} \\
& =\Delta_2 \left( \Delta_{n+1} \Delta_{\cL_0} + \Delta_{n} \Delta_{\cL_{-1}} \right) + \Delta_{n} \Delta_{\cL_0} + \Delta_{n-1} \Delta_{\cL_{-1}} \\
&= (\Delta_2 \Delta_{n+1} + \Delta_n) \Delta_{\cL_0} + (\Delta_2 \Delta_{n} + \Delta_{n-1}) \Delta_{\cL_0} \\
& = \Delta_{n+2} \Delta_{\cL_0} + \Delta_{n+1} \Delta_{\cL_{-1}}
\end{align*}
The first line is the oriented skein relation, the second is obtained by using the induction hypothesis, the third is obtained by rearranging the terms and the fourth is obtained by applying the oriented skein relation to the Alexander polynomials of torus knots.  A similar inductive argument proves the formula for $n < -1$.
\end{proof}

\begin{corollary}
\label{cor:twisting-Alex}
There exists some $k > 0$, some $d \in \ZZ$, and some polynomial $f(t) \in \ZZ[\sqrt{t}]$ such that for $n$ sufficiently large, the Alexander polynomial of $\cL_n$ has the form
\[\Delta_{\cL_n}(t) = t^{\frac{n-k}{2}} \cdot f(t)+ d \cdot \Delta_{T(2,n-k)}(t) +  t^{-\frac{n-k}{2}} \cdot f(t^{-1}) \]

\end{corollary}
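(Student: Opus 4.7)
The plan is to combine Proposition~\ref{prop:twist-Alex-relation} with the explicit form of $\Delta_n(t)$, the Alexander polynomial of the $(2,n)$-torus link.  The key structural fact is that $\Delta_n(t)$ has alternating $\pm 1$ coefficients supported in gradings $\{-(n-1)/2,\dots,(n-1)/2\}$; equivalently,
\[
(t^{1/2}+t^{-1/2})\Delta_n(t)=t^{n/2}-(-1)^n t^{-n/2}.
\]
Multiplying the identity $\Delta_{\cL_n}=\Delta_{n+1}\Delta_{\cL_0}+\Delta_n\Delta_{\cL_{-1}}$ by $(t^{1/2}+t^{-1/2})$ and regrouping yields
\[
(t^{1/2}+t^{-1/2})\Delta_{\cL_n}(t)=t^{n/2}F(t)+(-1)^n t^{-n/2}G(t),
\]
where $F(t)=t^{1/2}\Delta_{\cL_0}(t)+\Delta_{\cL_{-1}}(t)$ and $G(t)=t^{-1/2}\Delta_{\cL_0}(t)-\Delta_{\cL_{-1}}(t)$ are fixed Laurent polynomials independent of $n$.

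Next, I would fix an integer $M$ bounding the Alexander supports of $\Delta_{\cL_0}$ and $\Delta_{\cL_{-1}}$, so that for $n>4M$ the supports of $t^{n/2}F$ and $t^{-n/2}G$ are disjoint.  Pick $k$ large enough to cleanly separate ``extremal'' gradings $|j|>(n-k)/2$ from ``middle'' gradings $|j|\le(n-k)/2$; concretely, any $k>2M+1$ will work.  The coefficients of $\Delta_{\cL_n}$ in the top extremal range arise from dividing $t^{n/2}F(t)$ by $(t^{1/2}+t^{-1/2})$ and, for all sufficiently large $n$, agree with those of $t^{(n-k)/2}f(t)$ for a fixed polynomial $f\in\ZZ[\sqrt{t}]$ depending only on $F$ and $k$.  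The bottom extremal range is then forced by the $t\leftrightarrow t^{-1}$ palindromic symmetry of the Alexander polynomial to be $t^{-(n-k)/2}f(t^{-1})$.

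It remains to identify the middle.  In the range $|j|\le(n-k)/2$, each coefficient of $\Delta_{\cL_n}$ is a convolution of a coefficient of $\Delta_{\cL_0}$ (resp.\ $\Delta_{\cL_{-1}}$) against the interior of the alternating $\pm 1$ pattern of $\Delta_{n+1}$ (resp.\ $\Delta_n$), with no boundary effects once $k$ is large enough.  Because the alternating pattern is uniform across the middle, these convolutions collapse to a single $n$-independent integer $d$, determined by $\Delta_{\cL_0}(-1)$ and $\Delta_{\cL_{-1}}(-1)$, multiplied by an alternating sign in $j$.  This coefficient pattern is exactly that of $d\cdot\Delta_{T(2,n-k)}(t)$, so summing the three ranges yields the claimed decomposition.

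The main obstacle I anticipate is the bookkeeping around half-integer exponents and the parity of $n$, which determines whether $\cL_n$ is a knot or a two-component link and hence the exact form of the palindromic symmetry.  Once $k$ is chosen large compared with $M$, the three ranges decouple completely and the identification of the middle with a torus-link polynomial is essentially forced by the alternating structure of $\Delta_n$; the signs and the integer $d$ can then be read off from a direct computation on the top coefficient or by evaluating at $t=-1$.
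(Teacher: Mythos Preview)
Your approach is correct and somewhat cleaner than the paper's.  Both proofs start from Proposition~\ref{prop:twist-Alex-relation} and the explicit alternating form of $\Delta_n$, but the paper proceeds by directly comparing coefficients: writing $\Delta_{\cL_n}=\sum c_s t^s$ and $\Delta_{\cL_{n+2}}=\sum d_s t^s$, it observes that each $c_s$ is an alternating partial sum of the coefficients of $\Delta_{\cL_0}$ and $\Delta_{\cL_1}$ and that for $n$ large and $s\ge 0$ one has $c_s=d_{s+1}$, from which the claimed form follows by induction and symmetry.  Your route instead clears denominators via the identity $(t^{1/2}+t^{-1/2})\Delta_n=t^{n/2}-(-1)^n t^{-n/2}$, obtaining the closed form $(t^{1/2}+t^{-1/2})\Delta_{\cL_n}=t^{n/2}F(t)+(-1)^n t^{-n/2}G(t)$ with $F,G$ fixed; this makes the separation into ``top,'' ``bottom,'' and ``middle'' ranges transparent and identifies $d$ explicitly in terms of $\Delta_{\cL_0}(-1)$ and $\Delta_{\cL_{-1}}(-1)$.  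The paper's argument is slightly terser but relies on an unstated coefficient computation; yours is more self-contained, at the cost of the parity and half-integer bookkeeping you already flagged.  One point worth recording when you write it up: the symmetry $\Delta_{\cL_n}(t^{-1})=\pm\Delta_{\cL_n}(t)$ forces $G(t)=\mp F(t^{-1})$, which is what lets you match the bottom range to $t^{-(n-k)/2}f(t^{-1})$ rather than to an a priori independent polynomial.
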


\begin{proof}
The Alexander polynomial of $T(2,k)$ is either
\[ \Delta_k(t) = \sum_{s = -\frac{k-1}{2}}^{\frac{k-1}{2}} (-t)^s \qquad \text{ or } \qquad \Delta_k(t) = t^{\frac{1}{2}}\sum_{s = -\frac{k}{2}}^{\frac{k}{2} -1} (-t)^s \]
according to whether $k$ is odd or even. Without loss of generality, assume that $\Delta_{\cL_1} \in \ZZ[t^{\pm 1}]$ and $n$ is odd.  Suppose that the Alexander polynomials of $\cL_1,\cL_0,\cL_n, \cL_{n+2}$ have the forms
\begin{align*}
\Delta_{\cL_1} &=\sum_{s=-k_0}^{k_0} a_s t^s  &\Delta_{\cL_0} &= t^{-\frac{1}{2}} \cdot \sum_{s = -k_1}^{k_1+1} b_s t^{s} \\
 \Delta_{\cL_n} &= \sum_{s = -k_n}^{k_n} c_s t^s &  \Delta_{\cL_{n+2}} &= \sum_{s = -k_{n+2}}^{k_{n+2}} d_s t^s
\end{align*}
Applying Proposition \ref{prop:twist-Alex-relation}, a straightforward computation shows that the coefficents $\{c_s\}$ are an alternating sum of some subsets of the coefficients $\{a_s\}$ and $\{b_s\}$.  For $n$ sufficiently large and $s \geq 0$ the coefficients of $\Delta_{\cL_n}$ and $\Delta_{\cL_{n+2}}$ satisfy the relation $c_s = d_{s+1}$.  This implies that $\Delta_{\cL_n}$ has the specified form.
\end{proof}

\begin{corollary}
\label{cor:degree-Alex}
If $\cL_1$ is a knot, then the degree of $\Delta_{\cL_n}$ goes to infinity as $|n|$ goes to infinity.
\end{corollary}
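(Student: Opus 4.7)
The plan is to leverage the stabilized form of $\Delta_{\cL_n}$ provided by Corollary \ref{cor:twisting-Alex} and show that the three summands appearing there cannot cancel at the top exponent. I would restrict first to $n$ odd and positive. Since $\cL_1$ is a knot and the two strands at the distinguished crossing belong to its single component, $\cL_n$ is again a knot for every odd $n$, so $\Delta_{\cL_n}(t) \neq 0$. Because the integers $k, d$ and the polynomial $f(t) \in \ZZ[\sqrt{t}]$ appearing in Corollary \ref{cor:twisting-Alex} are fixed independent of $n$, the degenerate case $f \equiv 0$ and $d = 0$ would force $\Delta_{\cL_n} \equiv 0$ for every sufficiently large odd $n$; this is impossible, so $(f, d) \neq (0, 0)$.

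The main step is a disjoint-support analysis of the three terms in the formula. The exponents occurring in $t^{(n-k)/2} f(t)$ lie in $[(n-k)/2,\, (n-k)/2 + \deg f]$; those in $t^{-(n-k)/2} f(t^{-1})$ lie in $[-(n-k)/2 - \deg f,\, -(n-k)/2]$; and those in $d \cdot \Delta_{T(2, n-k)}(t)$ lie in $[-(n-k-1)/2,\, (n-k-1)/2]$. For $n$ sufficiently large these three ranges are pairwise disjoint, so no cancellation can occur at the extreme exponents. Consequently $\deg \Delta_{\cL_n} = (n-k)/2 + \deg f$ when $f \neq 0$, and $\deg \Delta_{\cL_n} = (n-k-1)/2$ when $f = 0$ but $d \neq 0$; in either case the degree grows linearly with $n$, proving $\deg \Delta_{\cL_n} \to \infty$ as $n \to +\infty$. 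The regime $n \to -\infty$ is handled by running the skein induction of Proposition \ref{prop:twist-Alex-relation} in the negative direction to obtain a mirror-symmetric stabilized form, to which the same disjoint-support analysis applies verbatim.

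The only delicate point is ensuring no collapse of the top exponent, and this is exactly what the disjoint-support observation secures. It rests on two facts: that $f(t) \in \ZZ[\sqrt{t}]$ contributes only nonnegative fractional powers, so the term $t^{(n-k)/2} f(t)$ is supported in exponents $\geq (n-k)/2$, and that the torus-knot factor $\Delta_{T(2, n-k)}(t)$ has top exponent bounded by $(n-k-1)/2 < (n-k)/2$. Together with the symmetry $t \mapsto t^{-1}$, this prevents any of the three summands from interfering at the extremes, which is what makes the degree bound go through.
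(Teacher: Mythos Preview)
Your argument is correct and follows essentially the same route as the paper: use the stabilized form of Corollary~\ref{cor:twisting-Alex}, rule out the degenerate case $(f,d)=(0,0)$ using that $\cL_n$ is a knot for odd $n$, and read off the linear growth of the degree. The paper is simply terser---it invokes the nonvanishing of the determinant $|\Delta_{\cL_n}(-1)|$ rather than of $\Delta_{\cL_n}$ itself, and asserts the degree bound without spelling out the disjoint-support analysis you carry out explicitly---but the underlying reasoning is the same.
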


\begin{proof}
Since $\cL_1$ is a knot, the determinant $|\Delta_{\cL_n}(-1)|$ is nonzero when $n$ is odd.  Thus, in the formula from Corollary \ref{cor:twisting-Alex} the polynomial $f(t)$ and integer $d$ cannot both be $0$.  As a result, the degree of $\Delta_{\cL_n}$ is at least $n - k$ for some $k$ independent of $n$.
\end{proof}

\subsection{Knot Floer homology}

A multi-pointed Heegaard diagram $\SH = (\Sigma,\alphas,\betas,\z,\w)$ for a link $L \subset S^3$ is a tuple consisting of a genus $g$ Riemann surface $\Sigma$, two multicurves $\alphas = \{ \alpha_1 \cup \dots \cup \alpha_{g+n} \}$ and $\betas = \{ \beta_1 \cup \dots \cup \beta_{g+n}\}$, and two collections of basepoints $\z = \{z_1,\dots, z_{n-1} \}$ and $\w = \{w_1,\dots,w_{n-1}\}$ such that
\begin{enumerate}
\item $(\Sigma, \alphas,\betas)$ is a Heegaard diagram for $S^3$,
\item each component of $\Sigma \setminus \alphas$ and $\Sigma \setminus \betas$ contains exactly 1 $\z$-basepoint and 1 $\w$-basepoint,
\item the basepoints $\zs,\ws$ determine the link $L$ as follows: choose collections of embedded arcs $\{\gamma_1,\dots, \gamma_{n-1} \}$ in $\Sigma - \alphas$ and $\{\delta_1,\dots,\delta_{n-1}\}$ in $\Sigma \setminus \betas$ connecting the basepoints.  Then after depressing the arcs $\{\gamma_i\}$ into the $\alpha$-handlebody and the arcs $\{\delta_j\}$ into the $\beta$-handlebody, their union is $L$.
\end{enumerate}

From a multipointed multipointed Heegaard diagram $\SH$ we obtain a complex $\CFKm(\SH)$.  In the symmetric product $\Sym^{g+n}(\Sigma)$ of the Heegaard surface, the multicurves $\alphas,\betas$ determine $(g+n)$-dimensional tori $\TT_\alpha = \alpha_1 \cup \dots \cup \alpha_{g+n}$ and $\TT_\beta = \beta_1 \cup \dots \cup \beta_{g+n}$.  The knot Floer complex $\CFKh(\SH)$ is freely generated over $\FF[U_{w_2},\dots,U_{w_{n-1}}]$ by the intersection points $\mathfrak{G}(\SH) = \TT_{\alpha} \cap \TT_{\beta}$.  

The complex possesses two gradings, the Maslov grading and the Alexander grading.  Given any Whitney disk $\phi \in \pi_2(\x,\y)$ connecting the two generators, the relative gradings of two generators $\x,\y \in \TT_{\alpha} \cap \TT_{\beta}$ satisfy the formulas
\begin{align*}
M(\x) - M(\y) &= \mu(\phi) - 2 \sum n_{w_i}(\phi) \\
A(\x) - A(\y) &= \sum n_{z_i}(\phi) - \sum n_{w_i}(\phi)
\end{align*}
where $\mu(\phi)$ is the Maslov index of $\phi$ and $n_{z_i}(\phi)$ and $n_{w_i}(\phi)$ denote the algebraic intersection numbers of $\phi$ with respect to the subvarieties $\{w_i\} \times \Sym^{g+n-1}(\Sigma)$ and $\{z_i\} \times \Sym^{g+n-1}(\Sigma)$.  Note that while there are formulations of link Floer homology with an independent Alexander grading for each link component, we restrict to a single Alexander grading.  In addition, multiplication by any formal variable $U_{w_i}$ decrease the Maslov grading by 2 and the Alexander grading by 1.  The graded Euler characteristic of the complex is a multiple of the Alexander polynomial of the link and satisfies the formula
\[(t^{\frac{1}{2}} - t^{-\frac{1}{2}})^{l-1} \Delta_L(t) = \sum_{j \in \ZZ} t^j \cdot \left( \sum_{i \in \ZZ} (-1)^i \text{ dim}_{\FF} \CFKh(\SH)_{i,j} \right)\]
where $l$ is the number of components of $L$.

The differential is defined by certain counts of pseudoholomorphic curves in $\Sym^{g+n}(\Sigma)$.  We review some of the analytic details from \cite{OS-HF}.  Fix a complex structure $j$ and Kahler form $\eta$ on $\Sigma$ and let $J$ be the induced complex structure on $\Sym^{g+n}(\Sigma)$.  Together with the basepoints $\zs$, these determine a set $\cJ(j,\eta,V_{\zs})$ of almost-symmetric complex structures on $\Sym^{g+n}(\Sigma)$.  Set $\DD = [0,1] \times i \RR \subset \CC$. If $j$ is generic, then we can choose a generic path $J_s$ in a small neighborhood of $J \in \cJ(j,\eta,V_{\zs})$ such that for any pair $\x,\y \in \TT_{\alpha} \cap \TT_{\beta}$ and any $\phi \in \pi_2(\x,\y)$, the moduli space of maps

\[ \mathcal{M}_{J_s}(\phi) := \left\{ u: \mathbb{D} \rightarrow \Sym^N(S^2) \middle| \begin{array}{r l r l}
u(\{1\} \times \RR) & \subset \TT_{\alpha} & \lim_{t \rightarrow -\infty} u(s + it) &= \x \\
u(\{0\} \times \RR) & \subset \TT_{\beta} & \lim_{t \rightarrow \infty} u(s + it) &= \y \\
\frac{du}{ds} + J_s \frac{du}{dt} &= 0  & [u] &= \phi \end{array} \right\} \]
is transversely cut out.  Translation in $\DD$ induces an $\RR$-action on $\cM_{J_s}(\phi)$ and the unparametrized moduli space $\widehat{\cM}_{J_s}(\phi)$ is the quotient space of this action.  If $\mu(\phi) = 1$ then $\widehat{\cM}_{J_s}(\phi)$ is a compact 0-manifold consisting of a finite number of points.

Let $J_s$ be a generic path of almost-symmetric complex structures.  Define

\[
	\widehat{\del} \x = \sum_{\y \in \TT_\alpha \cap \TT_\beta} \sum_{\substack{\phi \in \pi_2(\x,\y),\\ \mu(\phi) = 1,\\ n_\z(\phi) = 0, \\ n_{w_1}(\phi) = 0}} \# \widehat{\M}_{J_s}(\phi) \cdot U_{w_2}^{n_{w_2}(\phi)} \dots U_{w_{n-1}}^{n_{w_{n-1}}(\phi)} \cdot \y,
\]

The ``hat'' version of the knot Floer homology of $L$ is the homology of the complex $(\CFKh(\SH),\widehat{\del})$:
\[
	\HFKh_*(L) := \mr{H}_*(\CFKh(\SH)),
\]

It is independent of the choice of Heegaard diagram $\SH$ encoding $L$ or the path $J_{s}$ of almost-complex structures.

Knot Floer homology  satifies a skein exact triangle that categorifies the skein relation for the Alexander polynomial. 

\begin{theorem}[Skein exact triangle \cite{OS-HFK,OS-skein,OSS-book}]
Let $\cL_+,\cL_-,\cL_0$ be three links that differ at a single crossing.  If the two strands of $\cL_+$ meeting at the crossing lie in the same component, then there is an Alexander grading-preserving exact triangle

\[ \xymatrix{
\HFKh(\cL_0) \ar[rr]^{\widehat{f}} && \HFKh(\cL_+) \ar[dl]^{\widehat{g}} \\
& \HFKh(\cL_-) \ar[ul]^{\widehat{h}[-1]} & 
} \]

If the two strands of $\cL_+$ meeting at the crossing lie in two different components, then there is an Alexander grading-preserving exact triangle

\[ \xymatrix{
\HFKh(\cL_0)\otimes V \ar[rr]^{\widehat{f}} && \HFKh(\cL_+) \ar[dl]^{\widehat{g}} \\
& \HFKh(\cL_-) \ar[ul]^{\widehat{h}[-1]} & 
} \]

where $V$ is a bigraded vector space satisfying
\[V_{m,s} = \begin{cases} \FF^2 & \text{if }(m,s) = (-1,0) \\ \FF & \text{if }(m,s) = (0,1) \text{ or }(-2,-1) \\ 0 & \text{otherwise} \end{cases}\]
\end{theorem}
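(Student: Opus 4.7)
The plan is to follow the standard holomorphic-triangle strategy of Ozsv\'ath and Szab\'o and realize all three resolutions by a single Heegaard triple. Concretely, I would start with a multi-pointed Heegaard diagram $(\Sigma,\alphas,\betas,\z,\w)$ for $\cL_+$ in which a small annular neighborhood of the distinguished crossing lies inside $\Sigma$ and contains exactly one $\beta$-curve $\beta_0$ together with the relevant basepoints. Replacing $\beta_0$ by two nearby curves $\beta_+$ and $\beta_-$, obtained by handlesliding and isotoping within this annulus so that the pairwise intersection pattern is a standard triangle, produces a Heegaard multi-diagram that simultaneously encodes $\cL_+$, $\cL_-$, and $\cL_0$ by the three attaching sets $\betas_+, \betas_-, \betas_0$ (with basepoint conventions chosen so that the distinguished region that distinguishes the three links is not swept).

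Next I would define the three maps $\widehat{f},\widehat{g},\widehat{h}$ by counts of pseudoholomorphic triangles in $\mathrm{Sym}^{g+n}(\Sigma)$ with boundary on the appropriate pairs of tori $\TT_{\betas_\bullet}, \TT_{\betas_\bullet'}, \TT_\alpha$, weighted by the $U_{w_i}$ variables as in the definition of $\widehat{\partial}$ and restricted to homotopy classes with $n_{\z}=0$ and $n_{w_1}=0$. A standard computation checks that each map preserves the Alexander grading and shifts Maslov grading by the expected amounts; the triangle of smallest area in the distinguished region contributes a canonical top generator to each small triangle map, which is what makes the maps nontrivial on Floer cohomology.

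The heart of the argument is exactness, which I would deduce from the general surgery-exact-triangle package: for three attaching sets related by the standard triangle move, the composition of any two successive triangle maps is chain-homotopic to zero, the homotopy being a count of pseudoholomorphic rectangles with a fourth edge on a small Hamiltonian isotope of the first $\betas$. A degeneration/stretching argument then shows that the induced map on the mapping cone of $\widehat{f}$ to $\HFKh(\cL_-)$ is a quasi-isomorphism, by matching holomorphic rectangles with broken configurations that pair triangles. The main technical obstacle, and the step I expect to be most delicate, is ensuring transversality and admissibility simultaneously with the basepoint constraints $n_\z = n_{w_1} = 0$, and controlling $\mathrm{Spin}^c$-decompositions so that the triangle counts really land in the claimed bigraded pieces.

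Finally, for the case where the two strands at the crossing lie in distinct components, the oriented resolution $\cL_0$ has one more component than $\cL_\pm$, so any Heegaard diagram for $\cL_0$ drawn from the chosen multi-diagram acquires an extra pair of basepoints with respect to the diagrams for $\cL_\pm$. Stabilizing so that all three complexes are built from diagrams with the same number of basepoints forces $\HFKh(\cL_0)$ to be replaced by $\HFKh(\cL_0)\otimes V$, where $V$ is the bigraded vector space describing an unknotted component; the claimed bigrading of $V$ is read off from the Maslov/Alexander shifts induced by a standard stabilization. With this tensor factor inserted, the triangle identification from the previous paragraph goes through unchanged.
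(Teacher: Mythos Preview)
The paper does not prove this theorem at all; it is stated with citations to \cite{OS-HFK,OS-skein,OSS-book} and used as a black box. Your outline is a faithful sketch of the Ozsv\'ath--Szab\'o holomorphic-triangle argument from those references, so in that sense your approach \emph{is} the approach of the literature the paper is invoking.

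That said, your final paragraph contains a genuine error in the bookkeeping that matters for where the $V$ factor goes. When the two strands at the crossing lie in \emph{distinct} components of $\cL_+$, the oriented resolution $\cL_0$ has one \emph{fewer} component than $\cL_\pm$, not one more: the resolution merges the two components into one. Consequently, in the common Heegaard multi-diagram (which carries enough basepoint pairs to encode $\cL_\pm$), the link $\cL_0$ is represented with one extra $z$--$w$ pair on a single component, and it is this extra pair that produces the tensor factor $V$ on $\HFKh(\cL_0)$. Your conclusion (that $V$ appears on $\cL_0$ in the two-component case) happens to be correct, but the stated reason is backwards; as written, your reasoning would instead justify putting a $V$ on $\cL_\pm$ in the same-component case, which is not what the theorem says. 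You should also be careful that the specific bigrading of $V$ quoted in the statement depends on the grading conventions used here for links (with a half-integer Alexander shift absorbed), so ``read off from a standard stabilization'' needs to be matched precisely to these conventions.
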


\subsection{A multi-pointed Heegaard diagram for $\cL_n$}

Choose an $N+1$-bridge presentation for $\cL_1$ so that near its distinguished crossing it has the form in the top right of Figure \ref{fig:crossing-bridge}.  The bridge presentation consists of $N+1$ bridges $\{a_0,\dots,a_N\}$ and $N+1$ overstrands $\{b_0,\dots,b_N\}$.  Label the arcs in the bridge presentation so that $a_1$ and $a_0$ are the left and right bridges in Figure \ref{fig:crossing-bridge} and the overstrands $b_1$ and $b_0$ share endpoints with $a_1$ and $a_0$, respectively.  We can also obtain a bridge presentation for $\cL_0$, the link obtained by taking the 0-resolution of the distinguished crossing, in the top left of Figure \ref{fig:crossing-bridge}.  Let $z_1$ and $z_0$ denote the endpoints of the left and right bridges, respectively, in the local picture of the crossing.

From these initial bridge presentations, we can obtain a bridge presentation for $\cL_n$.  Let $\gamma$ be a curve containing the points $z_1$ and $z_0$.  Orient $\gamma$ counter-clockwise, as the boundary of a disk containing the two points.  If $n$ is even, apply $\frac{n}{2}$ negative Dehn twists along $\gamma$ to the arcs $b_1$ and $b_0$ to obtain a bridge presentation for $\cL_n$.  If $n$ is odd, apply $\frac{n-1}{2}$ negative Dehn twists.

We can obtain a multipointed Heegaard diagram encoding $\cL_n$ from its bridge presentation.  For $i = 1,\dots,N$, let $\alpha_i$ be the boundary of a tubular neighborhood of the arc $a_i$ and let $\beta_i$ be the boundary of a tubular neighborhood of $b_i$.  Label the endpoints of the bridges as $z$- and $w$-basepoints so that the oriented boundary of $a_i$ is $w_i - z_i$.  Set $\zs = (z_0,z_1,\dots,z_N);$ $\ws = (w_0,w_1,\dots,w_N);$ $\alphas = (\alpha_1,\dots,\alpha_N);$ and $\betas = (\beta_1,\dots,\beta_N)$.  Let $\SH_n := (S^2,\alphas,\betas,\zs,\ws)$ denote this multipointed Heegaard diagram.  Note that locally, the diagrams $\SH_0$ and $\SH_1$ are identical.  In addition, the diagram $\SH_{n+2}$ can be obtained from the diagram $\SH_{n}$ by applying a negative Dehn twist along $\gamma$ to the multicurve $\betas$.

Let $T_{\gamma}: S^2 \rightarrow S^2$ denote the positive Dehn twist along $\gamma$ and let $T^*_{\gamma} : \Sym^N(S^2) \rightarrow \Sym^N(S^2)$ be the induced map.  If $\SH_n = (S^2,\alphas,\betas,\zs,\ws)$ is a multipointed Heegaard diagram for $\cL_n$ then $\SH_{n+2} = (S^2,\alphas, (T_{\gamma})^{-1}\betas,\zs,\ws)$.  The generators of $\CFKh(\SH_n)$ are $\tori$ and the generators of $\CFKh(\SH_{n+2})$ are $\TT_{\alpha} \cap (T^*_{\gamma})^{-1} \TT_{\beta}$.  The negative Dehn twist introduces 4 new intersection points between $\alpha_1$ and $\beta_1$ and does not destroy any.  Thus, there is a set injection from $\tori$ to $\TT_{\alpha} \cap (T^*_{\gamma})^{-1} \TT_{\beta}$.  If $\x \in \tori$ is a generator, let $\x'$ be the corresponding generator in $\TT_{\alpha} \cap (T^*_{\gamma})^{-1} \TT_{\beta}$.  The map $(T^*_{\gamma})^{-1}$ also induces a bijective map on Whitney disks from $\pi_2(\x,\y)$ to $\pi_2(\x',\y')$.  If $\phi \in \pi_2(\x,\y)$ is a Whitney disk, let $\phi'$ denote the corresponding Whitney disk.

We can partition the generators into 3 sets according to their vertex along $\alpha_1$.  The curves $\alpha_1,\beta_1$ intersect twice near the basepoint $z_1$ and we label these points $C,D$ as in Figure \ref{fig:crossing-heegaard}.  The $i^{\text{th}}$ negative Dehn twist along $\gamma$ introduces 4 new intersection points, which we label $a_i,b_i,c_i,d_i$.  See Figure \ref{fig:crossing-heegaard}.  Let $\x = (v_1,\dots,v_N)$ be a generator where $v_i \in \alpha_i$.  Define three sets:

\begin{align*}
\mathfrak{G}(\SH_n)_+ &:= \left\{ \x = (v_1,\dots,v_N) \, \middle| \, v_1 = C \text{ or } D \right\} \\
\mathfrak{G}(\SH_n)_{twist} &:= \left\{ \x = (v_1,\dots,v_N) \, \middle| \, v_1 \in \{a_i,b_i,c_i,d_i\} \text{ for some }i \right\} \\
\mathfrak{G}(\SH_n)_{-} &:= \mathfrak{G}(\SH_n) \smallsetminus \left( \mathfrak{G}(\SH_n)_+ \cup \mathfrak{G}(\SH_n)_{twist} \right)\\
\end{align*}

\begin{figure}
\centering
\labellist
	\Large\hair 2pt
	\pinlabel $\cL_{0}$ at 80 250
	\pinlabel $\gamma$ at 160 330
	\pinlabel $\cL_1$ at 600 250
	\pinlabel $\cL_2$ at 230 60
	\small\hair 2pt
	\pinlabel $a_1$ at 390 330
	\pinlabel $a_0$ at 690 330
	\pinlabel $b_0$ at 510 190
	\pinlabel $b_1$ at 575 190
	\pinlabel $b_1$ at 130 195
	\pinlabel $b_0$ at 195 195
	\pinlabel $a_1$ at 15 330
	\pinlabel $a_0$ at 310 330
	
\endlabellist
\includegraphics[width=.7\textwidth]{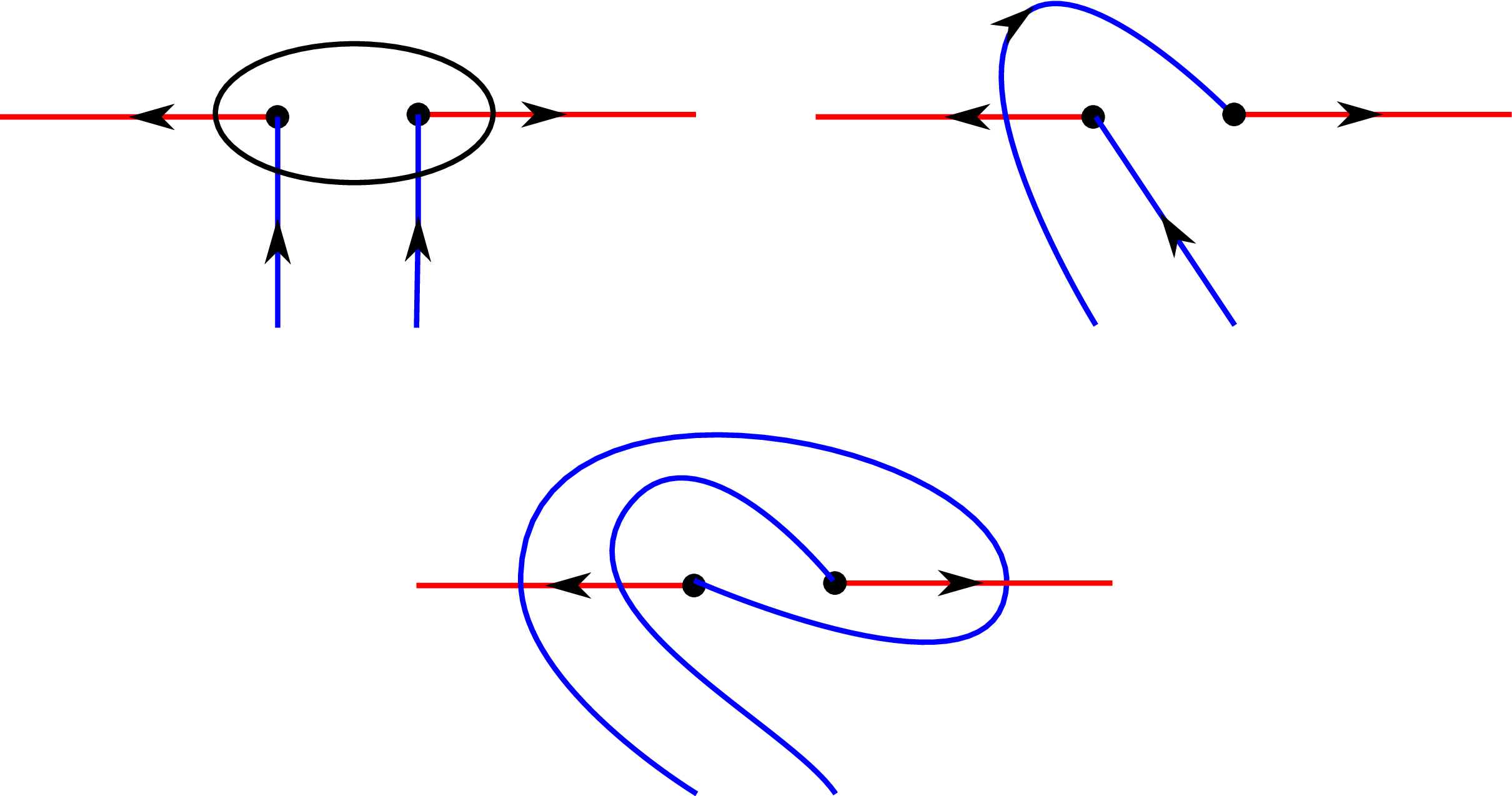}
\caption{Local pictures for $\cL_0,\cL_1,\cL_2$ in bridge position.  The diagram for $\cL_2$ can be obtained from the diagram for $\cL_0$ by applying a negative Dehn twist along $\gamma$ to the overstrands.}
\label{fig:crossing-bridge}
\end{figure}

\begin{figure}
\centering
\labellist
	\large\hair 2pt
	\pinlabel $\cL_0/\cL_1$ at 180 20
	\pinlabel $\cL_2/\cL_3$ at 480 20
	\small\hair 2pt
	\pinlabel $\alpha_1$ at 10 110
	\pinlabel $\beta_1$ at 90 50
	\pinlabel $\gamma$ at 210 200
	\pinlabel $C$ at 93 160
	\pinlabel $D$ at 93 108
	\pinlabel $a_1$ at 358 160
	\pinlabel $b_1$ at 358 108
	\pinlabel $c_1$ at 415 160
	\pinlabel $d_1$ at 415 108
	\pinlabel $C$ at 465 160
	\pinlabel $D$ at 465 107
	\pinlabel $\alpha_1$ at 320 110
	\pinlabel $\beta_1$ at 360 50
	\pinlabel $\gamma$ at 520 180
\endlabellist
\includegraphics[width=.8\textwidth]{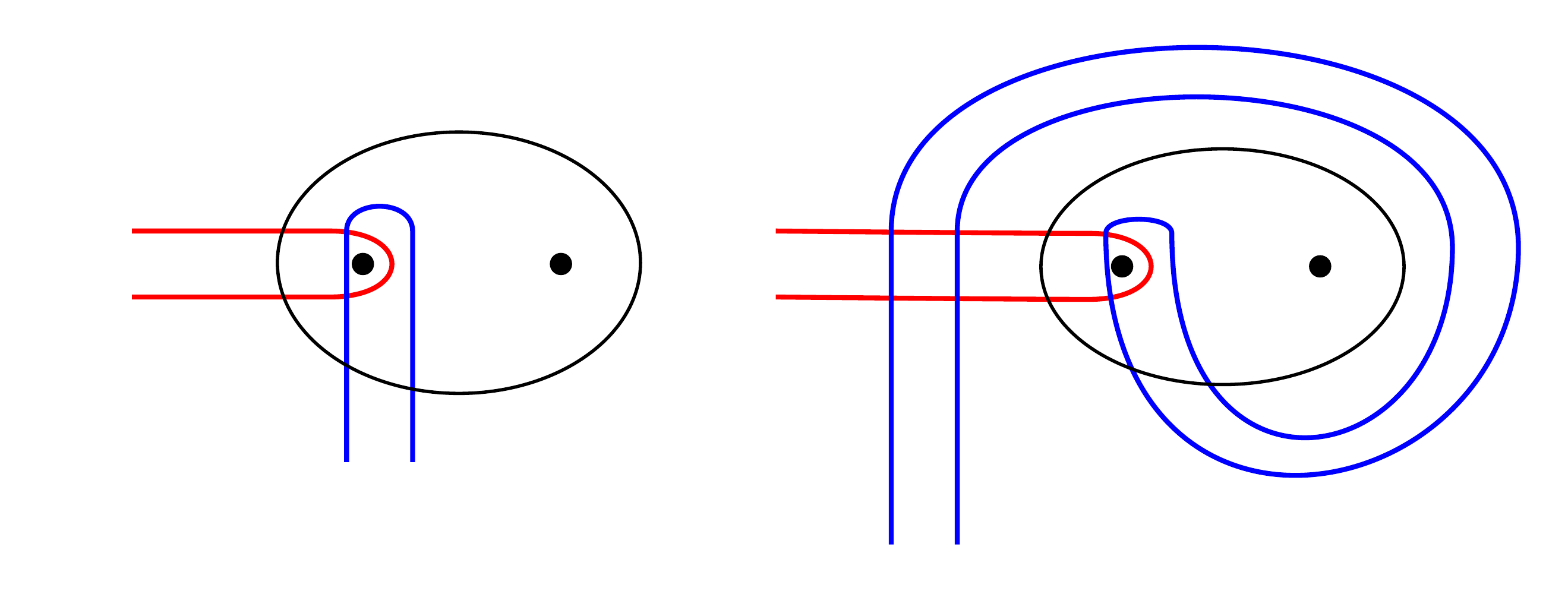}
\caption{Local pictures of the Heegaard diagram for $\cL_0$ or $\cL_1$ (on left) and $\cL_2$ or $\cL_3$ on right.  The Heegaard diagram on the right can be obtained from the left by applying a negative Dehn twist along $\gamma$ to $\betas$.  Both basepoints are $z$-basepoints.}
\label{fig:crossing-heegaard}
\end{figure}

\begin{lemma}
\label{lemma:homogeneous-grading}
Choose $\x,\y \in \mathfrak{G}(\SH_n)$ and $\phi \in \pi_2(\x,\y)$.  Let $\x',y',\phi'$ be the corresponding generators and Whitney disk.

\begin{enumerate}
\item If $\x,\y \in \mathfrak{G}(\SH_n)_- \cup \mathfrak{G}(\SH_n)_{twist}$ then
\[ n_z(\phi') = n_z(\phi) \quad n_w(\phi') = n_w(\phi) \quad \mu(\phi') = \mu(\phi)\]
\item if $\x,\y \in \mathfrak{G}(\SH_n)_+$ then
\[ n_z(\phi') = n_z(\phi) \quad n_w(\phi') = n_w(\phi) \quad \mu(\phi') = \mu(\phi)\]
\item If $\x \in \mathfrak{G}(\SH_n)_+$ and $\y \in \mathfrak{G}(\SH_n)_- \cup \mathfrak{G}(\SH_n)_{twist}$ then
\[ n_z(\phi') = n_z(\phi)-2 \quad n_w(\phi') = n_w(\phi) \quad \mu(\phi') = \mu(\phi) -2\]
\end{enumerate}
\end{lemma}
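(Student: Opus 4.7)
The plan is to compare the Whitney disks $\phi$ and $\phi'$ at the level of their domains $D(\phi),D(\phi')\subset S^2$, using Lipshitz's formula
\[\mu(\phi)=e(D(\phi))+n_{\x}(D(\phi))+n_{\y}(D(\phi))\]
so that the Maslov index, like the basepoint multiplicities $n_z,n_w$, becomes a function of the 2-chain alone. The crucial starting point is that the Dehn twist $T_\gamma$ can be supported in a thin annular neighborhood $A$ of $\gamma$ chosen to avoid the basepoints $\z,\w$ and to meet the $\alpha$-curves only in a subarc of $\alpha_1$. Outside $A$ the diagrams $\SH_n$ and $\SH_{n+2}$ coincide verbatim, so $D(\phi)$ and $D(\phi')$ agree away from $A$, and the entire question reduces to comparing their restrictions to the annulus.

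First I would handle the homogeneous cases (1) and (2). When both corners $v_1,w_1$ of $\phi$ lie in the same class, the $\beta_1$-boundary of $D(\phi)|_A$ can be matched to that of $D(\phi')|_A$ by a diffeomorphism of $A$ that is the identity on $\del A$, on the basepoints, and on the corners. Under any such diffeomorphism the Euler measure, the multiplicities at each basepoint, and the averaged corner contributions are all preserved, and the three identities follow.

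The interesting case is (3), where the $\beta_1$-boundary of $D(\phi)$ exits $A$ on one side through a corner in $\{C,D\}$ and on the other side through a corner outside $\{C,D\}$. In passing from $D(\phi)$ to $D(\phi')$ the $\beta_1$-boundary of the disk has to unwind once around $\gamma$, and direct inspection of the local picture in Figure \ref{fig:crossing-heegaard} shows that this adjoins to $D(\phi)|_A$ a standard twist 2-chain $R \subset A$ with
\[D(\phi')|_A - D(\phi)|_A = R, \qquad n_{z_0}(R) = n_{z_1}(R) = -1, \qquad n_w(R) = 0.\]
This yields $n_z(\phi') = n_z(\phi) - 2$ and $n_w(\phi') = n_w(\phi)$ immediately, while $\mu(\phi') - \mu(\phi) = -2$ follows from Lipshitz's formula applied to $R$, where $e(R)$ together with the corner contributions at $v_1 = v_1' \in \{C,D\}$ and at $w_1 = w_1'$ add to $-2$.

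The principal obstacle is the sign and corner-contribution bookkeeping in case (3): one must be consistent about the counter-clockwise orientation of $\gamma$, the fact that a \emph{negative} Dehn twist is applied to the $\beta$-curves, and the possibly half-integer corner contributions to $\mu$ when $w_1$ happens to be one of the twist-type intersections $a_i,\ldots,d_i$. Once the explicit model for $R$ in $A$ is pinned down, all three identities of the lemma reduce to a finite combinatorial calculation that can be read off the local picture.
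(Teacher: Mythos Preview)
Your reduction to the annulus has a real gap in case (3). You assert that because the diagrams $\SH_n$ and $\SH_{n+2}$ agree outside $A$, the domains $D(\phi)$ and $D(\phi')$ also agree there. That inference fails: what coincides outside $A$ are only the \emph{boundaries} of the two domains, so $D(\phi')-D(\phi)$ is a $2$-chain whose boundary is supported in $A$. In cases (1) and (2) the $\beta$-part of $\partial D(\phi)$ has algebraic intersection $0$ with $\gamma$, that boundary is null-homologous in $A$, and the difference really is confined to $A$; this is exactly the observation the paper uses for those two cases. In case (3), however, the $\beta$-boundary crosses $\gamma$ algebraically once, so the boundary of the difference is homologous to $\pm\gamma$ and the $2$-chain must fill in one of the two disks that $\gamma$ bounds. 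With the natural normalization it is the inner disk, which contains $z_0$ and $z_1$. Your own formulas expose the contradiction: having chosen $A$ to avoid all basepoints, you then write $R\subset A$ together with $n_{z_0}(R)=n_{z_1}(R)=-1$.

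The paper avoids the Lipshitz-formula bookkeeping on an awkward $2$-chain altogether. For case (3) it exhibits a single explicit bigon: with $\x=(C,v_2,\dots,v_N)\in\mathfrak{G}(\SH_n)_+$ and $\x_t=(c_k,v_2,\dots,v_N)\in\mathfrak{G}(\SH_n)_{twist}$ there is a disk $\phi\in\pi_2(\x_t,\x)$ with $(n_z,n_w,\mu)=(1,0,1)$, and one reads off from the local picture that the corresponding $\phi'$ has $(n_z,n_w,\mu)=(3,0,3)$. The general statement then follows from parts (1) and (2) together with the additivity of $n_z,n_w,\mu$ under the composition $\pi_2(\x_1,\x_2)\times\pi_2(\x_2,\x_3)\to\pi_2(\x_1,\x_3)$: any disk from $\mathfrak{G}_+$ to $\mathfrak{G}_-\cup\mathfrak{G}_{twist}$ can be written as a concatenation of this model bigon with disks handled by (1) and (2). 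Your approach can be repaired by correctly identifying $R$ as (minus) the inner disk and then running Lipshitz on it, but the one-bigon-plus-additivity argument is shorter and sidesteps the Euler-measure and corner-term accounting you flagged as the principal obstacle.
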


\begin{proof}
Let $D(\phi)$ be the domain in $\Sigma$ corresponding to $\phi$.  Orient $\gamma$ counterclockwise, as the boundary of the disk containing the basepoints $z_1,z_0$.  In the first 2 cases, the algebraic intersection of the $\beta$-components of $\del D(\phi)$ with $\gamma$ is 0.  Thus, the intersection numbers $n_z$ and $n_w$ and the Maslov index $\mu$ are unchanged by the Dehn twist.

To prove the third case, choose some $\x = (C,v_2,\dots,v_N) \in \mathfrak{G}(\SH_n)_-$ and let $\x_t = (c_k,v_2,\dots,v_N)$ where $k = \lfloor \frac{n}{2} \rfloor$.  There is a Whitney disk $\phi \in \pi_2(\x_t,\x)$ satisfying
\[n_z(\phi) = 1 \quad n_w(\phi) = 0 \quad \mu(\phi) = 1\]
The corresponding disk $\phi' \in \pi_2(\x'_t,\x')$ satisfies
\[n_z(\phi') = 3 \quad n_w(\phi) = 0 \quad \mu(\phi) = 3\]
The final case now follows from the first two and this observation since $n_z,n_w,$ and $\mu$ are additive under the composition
\[\pi_2(\x_1,\x_2) \times \pi_2(\x_2,\x_3) \rightarrow \pi_2(\x_1,\x_3)\]
\end{proof}

\begin{lemma}
\label{lemma:small-grading}
Let $\x \in \mathfrak{G}(\SH_n)$.  For $n$ sufficiently large, if $A(\x) \leq 1$ then $\x \in \mathfrak{G}(\SH_n)_- \cup \mathfrak{G}(\SH_n)_{twist}$.
\end{lemma}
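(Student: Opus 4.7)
The strategy is to show that the Alexander grading of every generator in $\mathfrak{G}(\SH_n)_+$ grows without bound as $n \to +\infty$, from which the lemma is immediate.

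First, I would reduce to finitely many cases. The set $\mathfrak{G}(\SH_n)_+$ has size at most $2 \cdot M$, where $M$ is the (finite) number of tuples $(v_2, \ldots, v_N)$ with $v_i \in \alpha_i \cap \beta_{\sigma(i)}$ in the part of the diagram away from the crossing; this count is independent of $n$ since the Dehn twist along $\gamma$ affects only the curve $\beta_1$. Under the obvious correspondence induced by the Dehn twist, each combinatorial type of generator $(C, v_2, \ldots, v_N)$ or $(D, v_2, \ldots, v_N)$ persists in $\mathfrak{G}(\SH_n)_+$ for all $n$. Thus it suffices to prove, for each such fixed combinatorial type $\x$, that $A_n(\x) \to +\infty$ as $n \to +\infty$.

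Next, I would introduce a reference generator $\y_n = (c_k, v_2, \ldots, v_N) \in \mathfrak{G}(\SH_n)_{twist}$ with $k = \lfloor n/2 \rfloor$, using the \emph{most recently introduced} twist point. The local bigon constructed in the proof of Lemma \ref{lemma:homogeneous-grading}(3) provides a Whitney disk $\phi_n \in \pi_2(\y_n, \x)$ with $n_z(\phi_n) = 1$ and $n_w(\phi_n) = 0$, so
\[ A(\y_n) - A(\x) = n_z(\phi_n) - n_w(\phi_n) = 1.\]
Since this local bigon exists with the same multiplicities in every $\SH_n$ (the newest twist is always "adjacent" to $C$ regardless of $n$), it suffices to show that $A(\y_n) \to +\infty$.

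To bound $A(\y_n)$ from below, I would invoke Corollary \ref{cor:twisting-Alex}: the Alexander polynomial $\Delta_{\cL_n}$ has top degree of order $(n - k_0)/2$ for some constant $k_0$, and by the graded Euler characteristic formula, the top Alexander grading of $\HFKh(\cL_n)$ grows at least like $(n - k_0)/2$. The core technical claim is that this top grading is realized at the chain level by a generator of the form $(c_k, v_2', \ldots, v_N')$ for an appropriate choice of background data: the newest twist intersection is geometrically the "extremal" point introduced by the latest Dehn twist, and the bigons of the diagram place it at the top of the Alexander filtration. Granting this, one obtains $A(\y_n) \geq (n - k_0)/2$, and hence $A(\x) = A(\y_n) - 1 \geq 2$ for $n$ sufficiently large.

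The main obstacle is justifying that the top chain-level Alexander grading is realized by some twist generator $\y_n$ of the desired combinatorial form. This is plausible from the geometry of the Dehn twist but requires a careful argument; it likely proceeds by induction on $n$, using the skein exact triangle applied to $(\cL_{n+1}, \cL_{n-1}, \cL_n)$ together with Lemma \ref{lemma:homogeneous-grading}(1), which controls how the relative Alexander gradings within $\mathfrak{G}_{twist}$ evolve. An analogous argument with the opposite sign handles the case $n \to -\infty$.
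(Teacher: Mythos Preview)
Your reduction to finitely many combinatorial types is correct and agrees with the paper. The detour through the reference generator $\y_n$, however, is circular: since $A(\y_n) = A(\x) + 1$ independently of $n$, showing $A(\y_n)\to\infty$ is exactly the same problem as showing $A(\x)\to\infty$. Nothing has been gained.

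More seriously, your ``core technical claim'' is false. The top chain-level Alexander grading in $\SH_n$ is \emph{not} realized by a twist generator of the form $(c_k,v'_2,\dots,v'_N)$; it is realized by a generator with $v_1 = D$, i.e.\ a generator in $\mathfrak{G}(\SH_n)_+$. Indeed, for any $V\in\{C,a_i,b_i,c_i,d_i\}$ there is a domain $\phi\in\pi_2\bigl((D,v_2,\dots,v_N),(V,v_2,\dots,v_N)\bigr)$ with $n_\zs(\phi)>0$ and $n_\ws(\phi)=0$, so $(D,v_2,\dots,v_N)$ has strictly larger Alexander grading. And even if the claim were true, it would give a lower bound only on $A(c_k,v'_2,\dots,v'_N)$ for some particular tuple $(v'_2,\dots,v'_N)$, not on $A(\y_n)=A(c_k,v_2,\dots,v_N)$ for the tuple inherited from your chosen $\x$; your sentence ``Granting this, one obtains $A(\y_n)\geq (n-k_0)/2$'' is a non sequitur.

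The ingredient you are missing is the one the paper uses directly: by Lemma~\ref{lemma:homogeneous-grading}(2), the relative Alexander gradings \emph{within} $\mathfrak{G}(\SH_n)_+$ are independent of $n$, so there is a constant $K$ with $|A(\x)-A(\y)|<K$ for all $\x,\y\in\mathfrak{G}(\SH_n)_+$ and all $n$. Once you observe (as above) that the maximal Alexander grading $A_{\max}$ is realized in $\mathfrak{G}(\SH_n)_+$, and that $A_{\max}$ grows linearly in $n$ by Corollary~\ref{cor:degree-Alex}, it follows immediately that \emph{every} element of $\mathfrak{G}(\SH_n)_+$ has $A(\x)>n-k-K$, and the lemma drops out. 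No inductive skein-triangle argument is needed.
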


\begin{proof}
Choose $\x,\y \in \mathfrak{G}(\SH_n)_+$.  By abuse of notation, let $\x,\y$ denote the corresponding generators in $\mathfrak{G}(\SH_{n'})_+$ for any $n' \geq 0$.  From Lemma \ref{lemma:homogeneous-grading}, we can conclude that $A(\x) - A(\y)$ is independent of $n$.  Moreover, since $\mathfrak{G}(\SH_n)_+$ is finite, there is some constant $K$, independent of $n$, such that $|A(\x) - A(\y)| < K$ for all $n$.  Similarly, there is some constant $L$ such that if $\x,\y \in \mathfrak{G}(\SH_n)_-$, then $|A(\x) - A(\y)| < L$ for any $n$.  However, if $\x \in \mathfrak{G}(\SH_n)_+$ and $\y \in \mathfrak{G}(\SH_n)_-$, then Lemma \ref{lemma:homogeneous-grading} also implies that $A(\x) - A(\y)$ grows without bound as $n$ limits to infinity.  In particular,  $A(\x) - A(\y) > 0$ when $n$ is sufficiently large.

Let $A_{max}$ denote the maximal Alexander grading of some generator $\x \in \mathfrak{G}(\SH_n)$.  We claim that if $n$ is sufficiently large then for all generators $\x$ satisfying $A(\x) = A_{max}$ then $\x \in \mathfrak{G}(\SH_n)_+$.  To prove this claim, suppose that $\x$ satisfies $A(\x) = A_{max}$.  If $n$ is sufficiently large, then $\x$ must be in $\mathfrak{G}(\SH_n)_{twist} \cup \mathfrak{G}(\SH_n)_+$ by the above argument.  Thus $\x$ has the form
\[\x = (V,v_2,\dots,v_n)\]
where $V \in \{C,D,a_1,b_1,c_1,d_1,\dots,a_k,b_k,c_k,d_k\}$ and $k = \lfloor \frac{n}{2} \rfloor$.  However, if $V \neq D$ define
\[\y = (D,v_2,\dots,v_n)\]
There is a domain $\phi \in \pi_2(\y,\x)$ with $n_z(\phi) > 0$ and $n_w(\phi) = 0$.  This contradicts the assumption that $A(\x) = A_{max}$.  Consequently, $V = D$ and $\x \in \mathfrak{G}(\SH_n)_+$.

Corollary \ref{cor:degree-Alex} implies that there exists some $k > 0$ such that $A_{max} \geq n- k$ for $n$ sufficiently large.  This further implies that if $\x \in \mathfrak{G}(\SH_n)_+$ then $A(\x) > n- k - K$.  Thus, for $n$ sufficiently large, no generator $\x$ with $A(\x) \leq 1$ can live in $\mathfrak{G}(\SH_n)_+$.
\end{proof}

Let $\CFKh(\SH_n)_{\leq j}$ be the subcomplex of $\CFKh(\SH_n)$ spanned by generators with Alexander grading at most $j$.  Let $\psi: \CFKh(\SH_n)_{\leq 1} \rightarrow \CFKh(\SH_{n+2})$ be the linear map which sends $\x$ to $\x'$ for any generator $\x \in \mathfrak{G}(\SH_n)$.

\begin{lemma}
Let $\x \in \mathfrak{G}(\SH_n)_- \cup \mathfrak{G}(\SH_n)_{twist}$.  Then $A(\psi \x) = A(\x) - 1$.
\end{lemma}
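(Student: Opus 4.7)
The plan is to first establish that $A(\psi\x) - A(\x)$ is a constant $c$ independent of $\x \in \mathfrak{G}(\SH_n)_- \cup \mathfrak{G}(\SH_n)_{twist}$, and then identify $c$ as $-1$ via the Alexander polynomial's behavior at the bottom degree.

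Constancy follows directly from Lemma \ref{lemma:homogeneous-grading}(1). For any $\x, \y \in \mathfrak{G}(\SH_n)_- \cup \mathfrak{G}(\SH_n)_{twist}$ and any $\phi \in \pi_2(\x,\y)$, we have $n_z(\phi') = n_z(\phi)$ and $n_w(\phi') = n_w(\phi)$, so the relative Alexander grading formula gives $A(\psi\x) - A(\psi\y) = A(\x) - A(\y)$. Hence $c := A(\psi\x) - A(\x)$ is a well-defined constant depending only on $n$.

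To determine $c$, I would use that $\cL_n$ is a knot for $|n| \gg 0$, so the graded Euler characteristic reads $\Delta_{\cL_n}(t) = \sum_j t^j \chi(\HFKh(\cL_n, j))$. The symmetry of the Alexander polynomial together with Corollary \ref{cor:twisting-Alex} yields $A_{\min}(\cL_{n+2}) = A_{\min}(\cL_n) - 1$. Pick a generator $\x_{\text{bot}} \in \mathfrak{G}(\SH_n)$ realizing $A_{\min}(\cL_n)$; by Lemma \ref{lemma:small-grading} it lies in $\mathfrak{G}(\SH_n)_- \cup \mathfrak{G}(\SH_n)_{twist}$, so $A(\psi\x_{\text{bot}}) = A_{\min}(\cL_n) + c$. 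Provided $\psi\x_{\text{bot}}$ also realizes the bottom of $\SH_{n+2}$, matching yields $c = -1$.

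The hard part will be this last step: ruling out that any ``new'' generator of $\SH_{n+2}$---one with $v_1$ among the four new intersection points $\{a_{k+1}, b_{k+1}, c_{k+1}, d_{k+1}\}$ introduced by the added Dehn twist---has Alexander grading strictly below $A_{\min}(\cL_n) - 1$. I would address this via a local disk-counting argument near the twist region that parallels the proof of Lemma \ref{lemma:homogeneous-grading}(3): construct explicit Whitney disks from $\psi\x_{\text{bot}}$ to each new twist generator and check that $n_z - n_w \geq 0$, which bounds the Alexander gradings of the new generators below by $A(\psi\x_{\text{bot}})$.
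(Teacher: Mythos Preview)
Your strategy is the same as the paper's: deduce constancy of the shift from Lemma~\ref{lemma:homogeneous-grading}(1), then pin it down via the Alexander polynomial (Corollary~\ref{cor:twisting-Alex}). The paper's execution of the second step is phrased slightly differently --- it works with the minimal Alexander grading $s$ at which the Euler characteristic of $\CFKh(\SH_n)$ is nonzero (an intrinsic quantity read off from $\Delta_{\cL_n}$) rather than with a single bottom generator, and then concludes $s+a=s-1$ --- but the substance is identical.

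Your proposed resolution of the ``hard part'' is where the proposal breaks. A new twist generator $\y=(v_1^{\text{new}},v_2,\dots,v_N)$ of $\SH_{n+2}$ will in general share \emph{none} of the coordinates $v_2,\dots,v_N$ with your fixed $\psi\x_{\text{bot}}$, so there is no local Whitney disk between them; any disk you exhibit is global and you have no a priori control on $n_z-n_w$. The correct local comparison is between each new generator and the generator with the \emph{same} tail $v_2,\dots,v_N$ but $v_1\in\{C,D\}$: exactly as in the proof of Lemma~\ref{lemma:homogeneous-grading}(3), there is a small bigon whose $n_z-n_w$ is bounded independently of $n$, so every new generator lies within bounded Alexander distance of an element of $\mathfrak{G}(\SH_{n+2})_+$. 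Lemma~\ref{lemma:small-grading} applied to $\SH_{n+2}$ then forces all new generators to have large Alexander grading, hence none can interfere at the bottom. The paper's one-line assertion that the Euler characteristics of $\CFKh(\SH_n)_{*,j}$ and $\CFKh(\SH_{n+2})_{*,j+a}$ agree for $j\leq 1$ is terse and implicitly rests on this same fact.
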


\begin{proof}
From Lemmas \ref{lemma:homogeneous-grading} and \ref{lemma:small-grading} we can conclude that $\psi$ preserves relative gradings.  Thus for all $\x$, the Alexander gradings satisfy $A(\psi \x) = A(\x) + a$ for some $a \in \ZZ$.

The Alexander grading shift follows from the computation of the Alexander polynomial in Corollary \ref{cor:twisting-Alex}.  Let $s$ be the minimal Alexander grading in which the Euler characteristic of $\CFKh(\SH_n)$ is nonzero.  The Euler characteristics of the $\CFKh(\SH_n)_{*,j}$ and $\CFKh(\SH_{n+2})_{*,j+a}$ agree if $j \leq 1$.  Thus $s + a$ is the minimal Alexander grading for which the Euler characteristic of $\CFKh(\SH_{n+2})$ is nonzero.  The Alexander polynomial computation implies that $s+a = s - 1$ and thus $a = -1$.
\end{proof}

\begin{proposition}
\label{prop:chain-bijection}
For $n$ sufficiently large, the map $\psi: \CFKh(\SH_n)_{\leq 1} \rightarrow \CFKh(\SH_{n+2})_{\leq 0}$ is a bijection of chain complexes.
\end{proposition}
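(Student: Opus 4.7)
The statement factors into three pieces: (a) $\psi$ maps $\CFKh(\SH_n)_{\leq 1}$ into $\CFKh(\SH_{n+2})_{\leq 0}$; (b) $\psi$ is a bijection on the underlying chain groups; and (c) $\psi$ commutes with the differentials. Piece (a) is immediate: if $\x$ is a generator with $A(\x)\leq 1$, Lemma \ref{lemma:small-grading} places $\x\in\mathfrak{G}(\SH_n)_-\cup\mathfrak{G}(\SH_n)_{twist}$, and the preceding lemma then gives $A(\psi\x)=A(\x)-1\leq 0$.

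For piece (b), injectivity is automatic because the map on generators was constructed as a set injection. For surjectivity, let $\y\in\mathfrak{G}(\SH_{n+2})$ satisfy $A(\y)\leq 0$; Lemma \ref{lemma:small-grading} applied to $\SH_{n+2}$ places $\y$ in $\mathfrak{G}(\SH_{n+2})_-\cup\mathfrak{G}(\SH_{n+2})_{twist}$. The only generators of $\SH_{n+2}$ outside the image of $\psi$ are those whose $\alpha_1$-coordinate is one of the four new intersection points $a_{k+1},b_{k+1},c_{k+1},d_{k+1}$ introduced by the extra Dehn twist. To rule these out for $n$ large, I would produce short Whitney disks, supported in a small neighborhood of the twist region and with $n_z,n_w$ bounded independently of $n$, connecting each such $\y$ to a $+$ generator of $\SH_{n+2}$. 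This bounds the difference of Alexander gradings by a constant, while the growth bound $A_{max}\geq n-k$ from Corollary \ref{cor:degree-Alex} together with the argument of Lemma \ref{lemma:small-grading} drives the Alexander grading of the $+$ generator to infinity, forcing $A(\y)>0$ and contradicting $A(\y)\leq 0$.

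For piece (c), Lemma \ref{lemma:homogeneous-grading}(1) shows that for $\x,\y\in\mathfrak{G}(\SH_n)_-\cup\mathfrak{G}(\SH_n)_{twist}$ the induced bijection $\phi\mapsto\phi'$ between $\pi_2(\x,\y)$ and $\pi_2(\x',\y')$ preserves $n_z$, $n_w$, and the Maslov index. Hence the defining constraints $\mu(\phi)=1$, $n_z(\phi)=0$, $n_{w_1}(\phi)=0$ of $\widehat{\del}$ are preserved and the $U_{w_i}$-exponents agree. Choosing the generic path of almost-symmetric complex structures on $\Sym^N(S^2)$ for $\SH_{n+2}$ to be the pushforward under $(T^*_\gamma)^{-1}$ of one chosen for $\SH_n$ identifies the unparametrized moduli spaces $\widehat{\M}(\phi)\cong\widehat{\M}(\phi')$, giving $\widehat{\del}\circ\psi=\psi\circ\widehat{\del}$ on the subcomplex.

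The principal obstacle is the surjectivity step in piece (b): one has to confirm that each of the four intersection points created by the last Dehn twist sits at Alexander grading strictly greater than $0$ when $n$ is large. Pieces (a) and (c) are essentially formal consequences of the two preceding lemmas together with the standard invariance of the unparametrized moduli spaces under pushforward of the almost-complex structure.
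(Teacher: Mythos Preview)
Your pieces (a) and (b) are fine, and in fact your surjectivity discussion is more explicit than the paper's, which simply cites the two preceding lemmas. The sketch you give for ruling out the four newest intersection points is correct in spirit: the short bigons from the $+$ generators to these points have $n_z$ bounded independently of $n$, so their Alexander gradings track those of $\mathfrak{G}(\SH_{n+2})_+$ and exceed $0$ once $n$ is large.

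The gap is in piece (c). The pushforward argument does not work as stated. Composition with $(T^*_\gamma)^{-1}$ sends a $J_s$-holomorphic disk with boundary on $(\TT_\alpha,\TT_\beta)$ to a $((T^*_\gamma)^{-1})_*J_s$-holomorphic disk with boundary on $\bigl((T^*_\gamma)^{-1}\TT_\alpha,(T^*_\gamma)^{-1}\TT_\beta\bigr)$. For $\SH_{n+2}$ the $\beta$-torus is indeed $(T^*_\gamma)^{-1}\TT_\beta$, but the $\alpha$-torus is still $\TT_\alpha$; since $\alpha_1$ meets $\gamma$ in two points, $(T^*_\gamma)^{-1}\TT_\alpha\neq\TT_\alpha$ and the pushforward disk fails the required $\alpha$-boundary condition. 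Even if one tried to patch this, the pushforward path need not lie in the class $\cJ(j,\eta,V_\zs)$ of almost-symmetric structures, nor be generic for the extra generators of $\SH_{n+2}$.

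The paper's argument is different and supplies the missing idea. Because both basepoints enclosed by $\gamma$ are $z$-basepoints and the relevant disks have $n_\zs(\phi)=0$, the domain $\overline{D}(\phi)$ avoids $\gamma$, so one can take an open $W\supset\overline{D}(\phi)$ disjoint from the support of $T_\gamma$. One then chooses a \emph{single} path $J_s$ that is simultaneously generic for both diagrams (genericity being open and dense). The point you are missing is Rasmussen's Localization Principle: it forces every $u\in\cM_{J_s}(\phi)$ to have image contained in $\Sym^N(W)$, on which $T^*_\gamma$ is literally the identity, so $u$ is already an element of $\cM_{J_s}(\phi')$ and conversely. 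Without a localization statement you cannot rule out holomorphic representatives of $\phi$ wandering into the twist region even when the domain does not; that is precisely the content the pushforward shortcut tries to skip.
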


\begin{proof}
From Lemmas \ref{lemma:homogeneous-grading} and \ref{lemma:small-grading} we can conclude that $\psi$ is an isomorphism of bigraded $\FF$-vector spaces and that it preserves relative gradings.  Thus, we just need to check that $\psi$ is a chain map.

Fix $\x,\y \in \mathfrak{G}(\SH_n)_- \cup \mathfrak{G}(\SH_n)_{twist}$ and some $\phi \in \pi_2(\x,\y)$ with $n_{\zs}(\phi) = 0$ and $\mu(\phi) = 1$.  We can choose an open neighborhood $W$ of $\overline{D}(\phi)$ to be disjoint from the curve $\gamma$ in Figure \ref{fig:crossing-heegaard}.  Thus, we can assume that the support of $T_{\gamma}$ is disjoint from $W$ and that the support of $T^*_{\gamma}$ is disjoint from $\Sym^N(W)$.  Genericity of paths of almost-complex structures is an open and dense condition.  Thus we can choose some path $J_s$ such that the moduli spaces $\cM_{J_s}(\phi_x)$ and $\cM_{J_s}(\phi_y)$ are transversely cut out for all choices of $\x_1,\x_2\in \TT_{\alpha} \cap \TT_{\beta}$; $\phi_x \in \pi_2(\x_1,\x_2)$; $\y_1,\y_2 \in \TT_{\alpha} \cap T^*_{\gamma} \TT_{\beta}$; $\phi_y \in \pi_2(\y_1,\y_2)$.  

Choose some $u \in \cM_{J_s}(\phi)$.  The Localization Principle \cite[Lemma 9.9]{Rasmussen} states that the image of $u$ is contained in $\Sym^N(W) \subset \Sym^N(S^2)$.  Since $T^*_{\gamma}$ is the identity on $\Sym^N(W)$, it is clear that $u \in \cM_{J_s}(\phi')$ as well.  Conversely, all maps $u' \in \cM_{J_s}(\phi')$ also lie in $\cM_{J_s}(\phi)$.  After quotienting by the $\RR$-action, this implies that $\# \widehat{\cM}_{J_s}(\phi)  = \# \widehat{\cM}_{J_s}(\phi')$.  It is now clear that $\psi(\widehat{\del} \x) = \widehat{\del} \psi \x$ for any $\x \in \CFKh(\SH_n)_{\leq 1}$.
\end{proof}

\subsection{$\HFKh$ computations}

\begin{proposition}
\label{prop:ranks-stabilization}
There exists some $k > 0$ such that for $n$ sufficiently large, the knot Floer homology of $\cL_n$ satisfies
\begin{align*}
\HFKh(\cL_n,j) &\cong \HFKh(\cL_{n+2},j+1) && \text{for $j \geq -k$}\\
\HFKh(\cL_{n},j) &\cong \HFKh(\cL_{n+2},j-1)[2] && \text{for $j \leq k$}
\end{align*}
where $[i]$ denotes shifting the homological grading by $i$.
\end{proposition}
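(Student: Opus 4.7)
The plan is to deduce both isomorphisms from Proposition \ref{prop:chain-bijection}: the second by an Alexander-graded decomposition of the subcomplex, and the first by applying the Maslov symmetry of $\HFKh$ to the second. To begin, I would observe that $\widehat{\del}$ preserves the Alexander grading, since in the formula $A(\x) - A(\y) = \sum n_{z_i}(\phi) - \sum n_{w_i}(\phi)$ the requirement $n_\z(\phi) = 0$ for disks counted in $\widehat{\del}$, together with the Alexander shift of $-1$ carried by each formal variable $U_{w_i}$, exactly cancels the $\sum n_{w_i}$ contribution. Hence $\CFKh(\SH_n)_{\leq 1}$ splits as $\bigoplus_{j \leq 1} \CFKh(\SH_n)_{*, j}$, and likewise for $\CFKh(\SH_{n+2})_{\leq 0}$. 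Combining Proposition \ref{prop:chain-bijection} with the Alexander shift lemma immediately preceding it, $\psi$ restricts on each Alexander-graded piece to a chain isomorphism $\CFKh(\SH_n)_{*, j} \cong \CFKh(\SH_{n+2})_{*, j-1}$ for every $j \leq 1$.

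To determine the Maslov shift, note that by Lemma \ref{lemma:homogeneous-grading}(1), $\psi$ preserves the relative Maslov grading among pairs in $\mathfrak{G}(\SH_n)_- \cup \mathfrak{G}(\SH_n)_{twist}$, so the absolute shift is a single constant $a$. By Lemma \ref{lemma:homogeneous-grading}(2), the natural identification $\mathfrak{G}(\SH_n)_+ \leftrightarrow \mathfrak{G}(\SH_{n+2})_+$ preserves $\mu$ and $n_w$ and hence respects the intrinsic absolute Maslov grading, while Lemma \ref{lemma:homogeneous-grading}(3) shows that relative Maslov gradings between $\mathfrak{G}_+$ and $\mathfrak{G}_- \cup \mathfrak{G}_{twist}$ drop by $2$ when passing from $\SH_n$ to $\SH_{n+2}$. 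Together these force $a = 2$, so that taking homology (with $k = 1$) gives the second isomorphism $\HFKh(\cL_n, j) \cong \HFKh(\cL_{n+2}, j-1)[2]$ for $j \leq 1$.

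The first isomorphism then follows by applying the Maslov symmetry $\HFKh_d(\cL, s) \cong \HFKh_{d-2s}(\cL, -s)$ (valid for knots, and in an appropriate single-Alexander-grading version for links) to both sides of the second. Substituting $j' = -j$, the symmetry shifts the Maslov grading by $-2j$ on the left and by $-2(j-1) = -2j + 2$ on the right; the extra $+2$ exactly cancels the existing $[2]$, yielding $\HFKh(\cL_n, j') \cong \HFKh(\cL_{n+2}, j'+1)$ for $j' \geq -1$.

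The main obstacle I expect is pinning down the absolute Maslov shift $a = 2$: while the relative computation from Lemma \ref{lemma:homogeneous-grading} is direct, tying it to the absolute normalization (fixed intrinsically by $\cL_n$ via the identification $\HFh(S^3) = \FF_{(0)}$ after collapsing the Alexander filtration) requires confirming that the canonical absolute Maslov grading on $\mathfrak{G}_+$ is consistent across the two Heegaard diagrams, a local compatibility that should follow from the fact that the Dehn twist is supported away from the $\mathfrak{G}_+$ intersection points $C$ and $D$.
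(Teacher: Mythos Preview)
Your overall architecture matches the paper's: deduce the $j \leq k$ isomorphism from the chain bijection of Proposition~\ref{prop:chain-bijection}, then obtain the $j \geq -k$ isomorphism via the conjugation symmetry $\HFKh_i(\cL,j)\cong\HFKh_{i-2j}(\cL,-j)$. The divergence---and the problem---is in how you fix the absolute Maslov shift. The paper does this externally: it applies the skein exact triangle twice, to the triples $(\cL_{n+2},\cL_n,\cL_{n+1})$ and $(\cL_{n+1},\cL_{n-1},\cL_n)$, at the minimal supported Alexander grading $s$, and reads off the shift $m=-2$ from the known bigrading of the factor $V$.

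Your attempt to extract the shift directly from Lemma~\ref{lemma:homogeneous-grading} has a genuine gap, and in fact leads to the wrong answer. The assertion that the absolute Maslov grading on $\mathfrak{G}_+$ is preserved ``because the Dehn twist is supported away from $C$ and $D$'' is not valid: the absolute Maslov grading is a global normalization (fixed by identifying the generator of $\HFh(S^3)$), not a local invariant of the intersection point, so the twist being supported elsewhere does not by itself pin it down. Worse, your hypothesis is actually false. For $\x\in\mathfrak{G}_+$ and $\y\in\mathfrak{G}_-\cup\mathfrak{G}_{twist}$, Lemma~\ref{lemma:homogeneous-grading}(3) gives
\[
M(\x')-M(\y') \;=\; \bigl(M(\x)-M(\y)\bigr)-2,
\]
so if $M(\x')=M(\x)$ then $M(\y')=M(\y)+2$; that is, $\psi$ would shift Maslov grading \emph{up} by $2$ on the low-Alexander piece, giving $\HFKh(\cL_n,j)\cong\HFKh(\cL_{n+2},j-1)[-2]$ rather than $[2]$, and after applying symmetry the first isomorphism would acquire an unwanted $[4]$. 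The paper's skein computation shows the correct shift on $\mathfrak{G}_-\cup\mathfrak{G}_{twist}$ is $-2$, which forces the $\mathfrak{G}_+$ generators to shift by $-4$, not $0$. So the ``local compatibility'' intuition is not merely unjustified but incorrect; you need the skein argument (or some other genuinely global input) to determine the absolute shift.
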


\begin{proof}
Let $\psi$ be the map from Proposition \ref{prop:chain-bijection} and let $\psi^*$ be the induced map on homology.  Proposition \ref{prop:chain-bijection} implies that for some $m \in \ZZ$, the map
$$\psi^*: \HFKh_i(\cL_n,j) \rightarrow \HFKh_{i + m}(\cL_{n+2},j-1)$$ 
is an isomorphism for all $i \in \ZZ$ and $j \leq 1$.  To prove the second isomorphism of the proposition, we need to show that $m = -2$.

To compute the Maslov grading shift, we apply the skein exact triangle.  Fix $n$ sufficiently large.  Let $s$ be the minimal Alexander grading in which $\HFKh(\cL_n)$ is supported.  Thus $s-1$ is the minimal Alexander grading in which $\HFKh(\cL_{n+2})$ is supported.  Applying the skein exact sequence to the triple $(\cL_{n+2},\cL_n,\cL_{n+1})$, we can see that $$\widehat{f}: \HFKh(\cL_{n+1},s-1) \rightarrow \HFKh(\cL_{n+2},s-1)$$ is a graded isomorphism.  Moreover, since $\HFKh(\cL_{n+2},j) \cong \HFKh(\cL_{n},j) \cong 0$ for $j < s-1$, exactness implies that $s-1$ is also the minimal Alexander grading in which $\HFKh(\cL_{n+1})$ is supported.

Now apply the skein exact triangle to the triple $(\cL_{n+1},\cL_{n-1},\cL_n)$.  The modules $\HFKh(\cL_{n+1})$ and $\HFKh(\cL_n) \otimes V$ are supported in Alexander grading $s-1$ but $\HFKh(\cL_{n-1},s-1) \cong 0$.  Thus $$\widehat{f}: (\HFKh(\cL_{n}) \otimes V)_{*,s-1} \rightarrow \HFKh_*(\cL_{n+1},s-1)$$ is an isomorphism.  Combining the above two steps, this implies that there is a bigraded isomorphism $\HFKh(\cL_n,s)[2] \cong \HFKh(\cL_{n+2},s-1)$.  Thus, the grading shift must be $m = -2$.  This proves the second formula.

The first statement follows from the second using the symmetry 
\[\HFKh_i(\cL_n,j) \cong \HFKh_{i-2j}(\cL_n,-j)\]
\end{proof}

\begin{lemma}
\label{lemma:skein-maximal}
For $|n|$ sufficiently large, the skein exact triangle for the triple $(\cL_{n+1},\cL_{n-1},\cL_{n})$ is a split short exact sequence.  Consequently, either
\begin{align}
\label{eq:skein-formula-1}
\HFKh(\cL_n) & \simeq \HFKh(\cL_{n-1})[1] \oplus \HFKh(\cL_{n+1}), \text{ or} \\
\label{eq:skein-formula-2}
\HFKh(\cL_n) \otimes V & \simeq \HFKh(\cL_{n-1})[1] \oplus \HFKh(\cL_{n+1})
\end{align}
depending on whether the two strands through the twist region of $\cL_n$ lie in distinct components or the same component of the link.
\end{lemma}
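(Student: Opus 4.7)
My plan is to split the lemma into two cases according to the parity of $n$. Given the setup of Theorem~\ref{thrm:stabilization}, where $\cL_1$ is a knot whose distinguished crossing has strands in the same component, a standard parity analysis shows that for one parity of $n$ (say $n$ even), $\cL_n$ is a two-component link whose twist-region strands lie in distinct components, while for the opposite parity ($n$ odd), $\cL_n$ is a knot whose twist-region strands lie in the same component. These two situations correspond precisely to the two formulas in the lemma.

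For the parity with strands in distinct components, the relevant skein triangle carries no $V$ factor, and the claimed splitting
\[\HFKh(\cL_n) \simeq \HFKh(\cL_{n+1}) \oplus \HFKh(\cL_{n-1})[1]\]
is exactly the content of Theorem~\ref{thrm:stabilization}(3), so no additional argument is required in this case.

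For the opposite parity, we must establish
\[\HFKh(\cL_n) \otimes V \simeq \HFKh(\cL_{n+1}) \oplus \HFKh(\cL_{n-1})[1].\]
The idea is to reduce to the first case by applying Theorem~\ref{thrm:stabilization}(3) to $n\pm 1$ (whose parity now falls into the first case), yielding
\[\HFKh(\cL_{n+1}) \oplus \HFKh(\cL_{n-1})[1] \simeq \HFKh(\cL_{n+2}) \oplus \HFKh(\cL_n)[1]^{\oplus 2} \oplus \HFKh(\cL_{n-2})[2].\]
I would then invoke Proposition~\ref{prop:ranks-stabilization} to identify the pieces $\HFKh(\cL_{n\pm 2})$ with appropriately shifted copies of $\HFKh(\cL_n)$ in the high and low Alexander grading ranges, and use the explicit decomposition of Theorem~\ref{thrm:stabilization}(2) to control the middle region where the $\widehat{A}$--$\widehat{B}$ alternation becomes essential. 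Comparing the result to the four-summand decomposition of $\HFKh(\cL_n) \otimes V$ dictated by the bigrading structure of $V$ yields the required bigraded isomorphism, which, being compatible with the skein exact triangle, forces the triangle to split.

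The principal obstacle is the bigraded bookkeeping in the second case. Although a dimension count using the Part~(2) decomposition confirms that the total ranks of the two sides agree, aligning the Maslov shifts from the $V$ factor with those produced by iterating Theorem~\ref{thrm:stabilization}(3) (which contribute Maslov shifts of $+1$ and $+2$) requires carefully exploiting the alternation between $\widehat{A}$ and $\widehat{B}$ summands at consecutive Alexander gradings, especially in the middle Alexander range where the two halves of Proposition~\ref{prop:ranks-stabilization} overlap. The negative-$n$ case follows by a symmetric argument applied to the mirror setup.
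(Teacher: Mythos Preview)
Your argument is circular. In the paper's logical order, Theorem~\ref{thrm:stabilization}(3) is \emph{deduced from} Lemma~\ref{lemma:skein-maximal} (see the last line of the proof of Theorem~\ref{thrm:stabilization}), not the other way around. Indeed, Part~(3) is nothing other than formula~\eqref{eq:skein-formula-1} specialized to even $n$, so invoking it to establish the ``strands in distinct components'' case is simply assuming the conclusion. Your second case then compounds the problem by applying Part~(3) to $n\pm 1$. The only inputs legitimately available at this point are Proposition~\ref{prop:ranks-stabilization} (equivalently Part~(1)) and Part~(2); neither of these says anything directly about $\HFKh(\cL_n)$ for the even parity, which is precisely what needs to be computed.

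The paper's proof of the first case does not appeal to any pre-existing splitting. Instead it plays two adjacent skein triangles against each other: the triangle for $(\cL_{n+1},\cL_{n-1},\cL_n)$ gives a rank upper bound $\rk\HFKh_i(\cL_n,j)\le \rk\HFKh_{i+1}(\cL_{n-1},j)+\rk\HFKh_i(\cL_{n+1},j)$, while the triangle for $(\cL_{n+2},\cL_n,\cL_{n+1})$ gives a lower bound on the same quantity via $(\HFKh(\cL_{n+1})\otimes V)$. Expanding the $V$ factor and rewriting each term using Proposition~\ref{prop:ranks-stabilization} turns the lower bound into a sum that dominates the upper bound, forcing equality and hence the vanishing of one map in the triangle. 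This rank-squeezing is the actual content you are missing.

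Your strategy for the second case---bootstrap from the first using Proposition~\ref{prop:ranks-stabilization} to match $\HFKh(\cL_n)\otimes V$ against $\HFKh(\cL_{n+1})\oplus\HFKh(\cL_{n-1})[1]$---is close to what the paper does once the first case is in hand, and would be fine there. But you must first supply an honest proof of the first case along the lines above.
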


\begin{proof}

Without loss of generality, we assume that $n$ is chosen so that in $\cL_{n}$, the two strands through the twist region lie in different components.  For any $j \in \ZZ$, the triangle inequality applied to the skein exact triangle proves that
\begin{align}
\label{eq:skein-bounded-1}
\rk \HFKh_i(\cL_n,j) &\leq \rk \HFKh_{i+1}(\cL_{n-1},j) + \rk \HFKh_{i}(\cL_{n+1},j) \\
\label{eq:skein-bounded-2}
 \rk (\HFKh(\cL_{n+1}) \otimes V)_{i-1,j}  & \leq \rk \HFKh_{i}(\cL_n,j) + \rk \HFKh_{i-1}(\cL_{n+2},j)
\end{align}

Suppose that $j \leq 0$.  Then applying Proposition \ref{prop:ranks-stabilization} to $|n|$ sufficiently large, we can conclude that
\begin{align}
\label{eq:rank-times-v}
\rk (\HFKh(\cL_{n+1}) \otimes V)_{i-1,j} &= \rk \HFKh_{i+1}(\cL_{n+1},j+1) + 2 \cdot \rk \HFKh_{i}(\cL_{n+1},j) \\ 
& \, \,+ \rk \HFKh_{i-1}(\cL_{n+1},j-1) \nonumber \\
\label{eq:stabilize-replace}
 &= \rk \HFKh_{i-1}(\cL_{n+3},j) +  \rk \HFKh_{i}(\cL_{n+1},j) \\ 
& \, \,+ \rk \HFKh_{i}(\cL_{n+1},j) + \rk \HFKh_{i+1}(\cL_{n-1},j) \nonumber \\
\label{eq:reverse-inequality}
& \geq  \rk \HFKh_{i-1}(\cL_{n+2},j) + \rk \HFKh_{i}(\cL_n,j)
\end{align}
Equation \ref{eq:rank-times-v} follows from the definition of $V$ and Equation \ref{eq:stabilize-replace} can be obtained from Equation \ref{eq:rank-times-v} using Proposition \ref{prop:ranks-stabilization}.  Finally, applying Inequality \ref{eq:skein-bounded-1} twice yields Inequality \ref{eq:reverse-inequality}.  Combining Inequalities \ref{eq:skein-bounded-2} and \ref{eq:reverse-inequality} proves that
\begin{align}
\label{eq:case2-equal}
\rk \HFKh_{i}(\cL_n,j) + \rk \HFKh_{i-1}(\cL_{n+2},j) = \rk (\HFKh(\cL_{n+1}) \otimes V)_{i-1,j}
\end{align}
Furthermore, the symmetry $\rk \HFKh_{i-2j}(\cL_n,-j) = \rk \HFKh_i(\cL_n,j)$ implies that Equation \ref{eq:case2-equal} holds for all $j \in \ZZ$.  This proves that the rank of $\widehat{g}$ is 0 in the skein exact triangle for the triple $(\cL_{n+2},\cL_{n},\cL_{n+1})$.

Now we consider the triple $(\cL_{n+1},\cL_{n-1},\cL_n)$.  For $j \leq 0$, we can conclude that

\begin{align}
\label{eq:v-tensor-expand}
\left( V \otimes \left( \HFKh(\cL_{n-1})[1] \oplus \HFKh(\cL_{n+1}) \right) \right)_{i,j} &\simeq \HFKh_{i-2}(\cL_{n-2},j) \oplus \HFK_{i-1}(\cL_{n},j)\\
& \, \, \oplus \HFKh_{i-1}(\cL_{n},j) \oplus \HFKh_i(\cL_{n+2},j) \nonumber \\
\label{eq:v-tensor-replace}
&\simeq \HFKh_{i}(\cL_{n},j+1) \oplus \HFK_{i-1}(\cL_{n},j)\\
& \, \, \oplus \HFKh_{i-1}(\cL_{n},j) \oplus \HFKh_{i-2}(\cL_{n},j-1) \nonumber \\
\label{eq:v-tensor-def}
& \simeq (V \otimes \HFKh(\cL_n))_{i,j}
\end{align}
The isomorphism in Line \ref{eq:v-tensor-expand} follows from the isomorphism in Line \ref{eq:skein-formula-2}, which has already been proven.  Line \ref{eq:v-tensor-replace} is obtained by applying Proposition \ref{prop:ranks-stabilization} and finally Line \ref{eq:v-tensor-def} follows from the definition of $V$.   Thus
\[V \otimes \left( \HFKh(\cL_{n-1})[1] \oplus \HFKh(\cL_{n+1}) \right) \cong V \otimes \HFKh(\cL_n)\]
Removing the $V$ factors on both sides proves the statement for $j \leq 0$ and the statement for $j \geq 0$ follows from symmetry.
\end{proof}

\begin{proof}[Proof of Theorem \ref{thrm:stabilization}]
The first statement is Proposition \ref{prop:ranks-stabilization}.

To prove the second, fix some $n_0$ sufficiently large so that Part (1) applies.  Set $k = n_0 + 1$ and
\begin{align*}
\widehat{F}_{\circ} &:= \bigoplus_{j = -\infty}^{-2} \HFKh(\cL_{n_0},j+2)[-2] & \widehat{A} &:= \HFKh(\cL_{n_0},-1)[-1] \\
\widehat{F}_{\bullet} &:= \bigoplus_{j=2}^{\infty} \HFKh(\cL_{n_0},j-2)[2] & \widehat{B} &:= \HFKh(\cL_{n_0},0)
\end{align*}

Then the statement holds for $n = n_0$.  Applying Part (1) inductively for $n > n_0$ proves Part (2).

Finally, Part (3) follows from Part (2) and Lemma \ref{lemma:skein-maximal}.
\end{proof}

\section{Positive mutants}

Throughout this section, let $\cL$ be an oriented link with an essential Conway sphere as in Figure \ref{fig:Lkl}.  Let $\cL_{k,l}$ denote the link obtained by adding $k$ half-twists above and $l$ half-twists below.  Note that applying two flypes along the horizontal axis to the tangle $\mathcal{T}$ is an isotopy between $\cL_{k,l}$ and $\cL_{k+2,l-2}$.  Thus, for any $k,l,i \in \ZZ$, the link $\cL_{k+2i,l-2i}$ is isotopic to $\cL_{k,l}$.  Furthermore, $\cL_{k+1,l-1}$ can be obtained from $\cL_{k,l}$ by the positive mutation and then a flype.

\begin{figure}
\centering
\labellist
	\huge\hair 2pt
	\pinlabel $\mathcal{T}$ at 565 85
	\pinlabel $l$ at 395 85
	\pinlabel $k$ at 760 85
\endlabellist
\includegraphics[width=.6\textwidth]{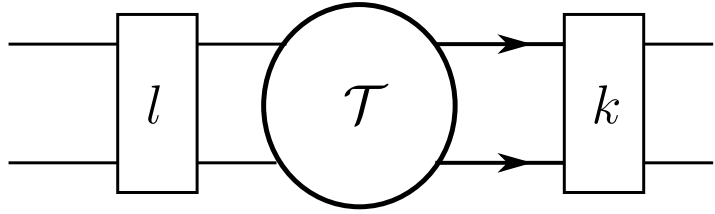}
\caption{The link $\cL_{k,l}$ near the Conway sphere containing the tangle $\mathcal{T}$.}
\label{fig:Lkl}
\end{figure}

\subsection{Bigraded invariance}

We prove the first part of Theorem \ref{thrm:mutants-isom-hfk} in this subsection and leave the second piece to the following subsection.  In addition, we then use Theorem \ref{thrm:mutants-isom-hfk} to prove Theorem \ref{thrm:pos-mutants-hfk}.

\begin{theorem}
\label{thrm:mutants-isom}
For $|n|$ sufficiently large, the mutants $\cL_{n,0}$ and $\cL_{n+1,-1}$ have isomorphic knot Floer homology
\[\HFKh(\cL_{n,0}) \simeq \HFKh(\cL_{n+1,-1})\]
\end{theorem}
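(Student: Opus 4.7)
The plan is to decompose both $\HFKh(\cL_{n,0})$ and $\HFKh(\cL_{n+1,-1})$ using Lemma~\ref{lemma:skein-maximal} applied in two different twist regions---the top for one, the bottom for the other---and to observe that the resulting decompositions coincide. By the flype isotopy $\cL_{k,l} \cong \cL_{k+2i,l-2i}$ we have $\cL_{n+1,-1} \cong \cL_{n-1,1}$, so it suffices to prove $\HFKh(\cL_{n,0}) \cong \HFKh(\cL_{n-1,1})$.

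First I would apply Lemma~\ref{lemma:skein-maximal} to a crossing in the top twist region of $\cL_{n,0}$, with skein triple $(\cL_{n+1,0},\cL_{n-1,0},\cL_{n,0})$. For $|n|$ large the triangle splits, giving
\[\HFKh(\cL_{n,0}) \otimes V^{\epsilon} \cong \HFKh(\cL_{n+1,0}) \oplus \HFKh(\cL_{n-1,0})[1]\]
for some $\epsilon \in \{0,1\}$. To produce a parallel decomposition of $\HFKh(\cL_{n-1,1})$, I would first flype it to the isotopic link $\cL_{n-1-2m,1+2m}$ for some large $m$, so that the bottom twist region contains many half-twists. Lemma~\ref{lemma:skein-maximal} then applies to a crossing in the bottom twist region, and two further flypes identify the skein neighbors $\cL_{n-1-2m,2+2m}$ and $\cL_{n-1-2m,2m}$ with $\cL_{n+1,0}$ and $\cL_{n-1,0}$ respectively (the sums $k+l$ and parities of $k$ agree in each case). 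This gives
\[\HFKh(\cL_{n-1,1}) \otimes V^{\epsilon'} \cong \HFKh(\cL_{n+1,0}) \oplus \HFKh(\cL_{n-1,0})[1]\]
for some $\epsilon' \in \{0,1\}$, with right-hand side identical to that of the first decomposition.

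The main obstacle will be showing $\epsilon = \epsilon'$: the factor $\epsilon$ records whether the two strands in the top twist region of $\cL_{n,0}$ lie in the same or in distinct components of the link, and $\epsilon'$ records the analogous information for the bottom twist region of $\cL_{n-1,1}$, so because these crossings sit on opposite sides of the Conway sphere in mutant links, the two conditions are not automatic. I plan to resolve this by taking graded Euler characteristics: mutation preserves both the Alexander polynomial and the number of components, so $\chi(\HFKh(\cL_{n,0})) = \chi(\HFKh(\cL_{n-1,1}))$, and combining the two isomorphisms gives $\chi(\HFKh(\cL_{n,0})) \cdot \bigl(\chi(V)^{\epsilon} - \chi(V)^{\epsilon'}\bigr) = 0$. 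Since $\chi(V) = (t^{1/2}-t^{-1/2})^2 \neq 1$, this forces $\epsilon = \epsilon'$ whenever $\chi \neq 0$, which holds for $\cL$ a knot by Corollary~\ref{cor:degree-Alex}. Once $\epsilon = \epsilon'$, cancelling the common $V^{\epsilon}$ factor---valid because $V$ has nonzero bigraded Poincar\'e polynomial and the ambient ring of Poincar\'e polynomials is a domain---yields the desired isomorphism $\HFKh(\cL_{n,0}) \cong \HFKh(\cL_{n-1,1}) \cong \HFKh(\cL_{n+1,-1})$.
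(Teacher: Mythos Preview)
Your overall strategy matches the paper's: apply Lemma~\ref{lemma:skein-maximal} to each mutant and compare the resulting decompositions. You are right to flag that the second application---to the bottom twist region of $\cL_{n+1,-1}$---needs justification, since the bottom twist parameter there is $-1$; the paper glosses over this. However, your proposed fix does not work as written. Flyping $\cL_{n-1,1}$ to $\cL_{n-1-2m,1+2m}$ is an isotopy, so the skein triple you obtain at bottom level $1+2m$ is, link for link, the \emph{same} triple as at bottom level $1$ before flyping. Whether that triangle splits is a property of the three links and the maps between them, not of any particular diagram, so increasing $m$ cannot change the answer. Concretely, the threshold in Lemma~\ref{lemma:skein-maximal} for the family $\{\cL_{n-1-2m,l}\}_l$ shifts by $2m$ under the reindexing, exactly cancelling your gain in $l$.

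What actually makes the second application valid is the flype identity used differently. Since $\cL_{k,l} \cong \cL_{k+2i,l-2i}$, the links in the bottom family $\{\cL_{n+1,l}\}_l$ coincide (according to parity of $l$) with those in the top families $\{\cL_{m,0}\}_m$ and $\{\cL_{m,1}\}_m$ at $m \approx n+l$. Hence the stabilization isomorphisms of Proposition~\ref{prop:ranks-stabilization} for the bottom family at any bounded $l$ are literally those of the top families at level $\approx n$, and for $|n|$ large the proof of Lemma~\ref{lemma:skein-maximal} then runs verbatim for the bottom skein triple at $l=-1$. In other words, in this two-parameter family the unstable range is where $|k+l|$ is small, so both applications of the lemma---one at $(n,0)$ varying $k$, one at $(n+1,-1)$ varying $l$---sit in the stable range once $|n|$ is large. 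This is the content behind the paper's terse ``Let $|n|$ be sufficiently large.''

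Your Euler-characteristic argument for $\epsilon=\epsilon'$ is correct but heavier than needed. Both skein triples share the same $\cL_+=\cL_{n+1,0}$, and the two oriented resolutions $\cL_{n,0}$ and $\cL_{n+1,-1}$ are positive mutants, hence have the same number of components; since $\epsilon$ simply records whether the resolution has one more or one fewer component than $\cL_+$, this forces $\epsilon=\epsilon'$ directly.
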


\begin{proof}
Let $|n|$ be sufficiently large.  Suppose that at the $n^{\text{th}}$-crossing above the mutation sphere, the two strands of $\cL_{n,0}$ lie in different components. By Lemma \ref{lemma:skein-maximal}, the knot Floer homology for the two mutant links are given by
\begin{align*}
\HFKh(\cL_{n,0}) &\simeq \HFKh(\cL_{n-1,0}) [1] \oplus \HFKh(\cL_{n+1,0}) \\
 \HFKh(\cL_{n+1,-1}) &\simeq \HFKh(\cL_{n+1,-2})[1] \oplus \HFKh(\cL_{n+1,0})
\end{align*}
The statements now follows from the fact that $\cL_{n-1,0}$ and $\cL_{n+1,-2}$ are isotopic.

Similarly, if at the $n^{\text{th}}$-crossing the two strands lie in the same component, then
\begin{align*}
\HFKh(\cL_{n,0}) \otimes V &\simeq \HFKh(\cL_{n-1,0}) [1] \oplus \HFKh(\cL_{n+1,0}) \\
 \HFKh(\cL_{n+1,-1}) \otimes V &\simeq \HFKh(\cL_{n+1,-2})[1] \oplus \HFKh(\cL_{n+1,0})
\end{align*}
and it is clear that $\HFKh(\cL_{n,0})$ and $\HFKh(\cL_{n+1,-1})$ are isomorphic.
\end{proof}

We can now use the Kinoshita-Terasaka and Conway knots to prove Theorem \ref{thrm:pos-mutants-hfk}.  Let $KT$ denote the Kinoshita-Terasaka knot ($11n42$) and let $C$ denote the Conway knot ($11n34$).  Figure \ref{fig:KTgamma} contains a diagram of $KT$.  The knots $KT$ and $C$ are positive mutants and thus Theorem \ref{thrm:mutants-isom} applies.  Let $\gamma$ be the curve on the Conway sphere for $KT$ that is fixed by the involution of $S^2$ corresponding to the positive mutation.  Let $\widetilde{KT}$ and $\widetilde{C}$ denote links given by the unions of $KT$ and $C$, respectively, with $\gamma$.

\begin{figure}
\centering
\labellist
	\small\hair 2pt
	\pinlabel $S_1$ at 50 150
	\pinlabel $a_1$ at 80 122
	\pinlabel $a_2$ at 15 30
	\pinlabel $b_1$ at 120 0
	\pinlabel $b_2$ at 110 122
	\pinlabel $S_2$ at 215 150
\endlabellist
\includegraphics[width=.45\textwidth]{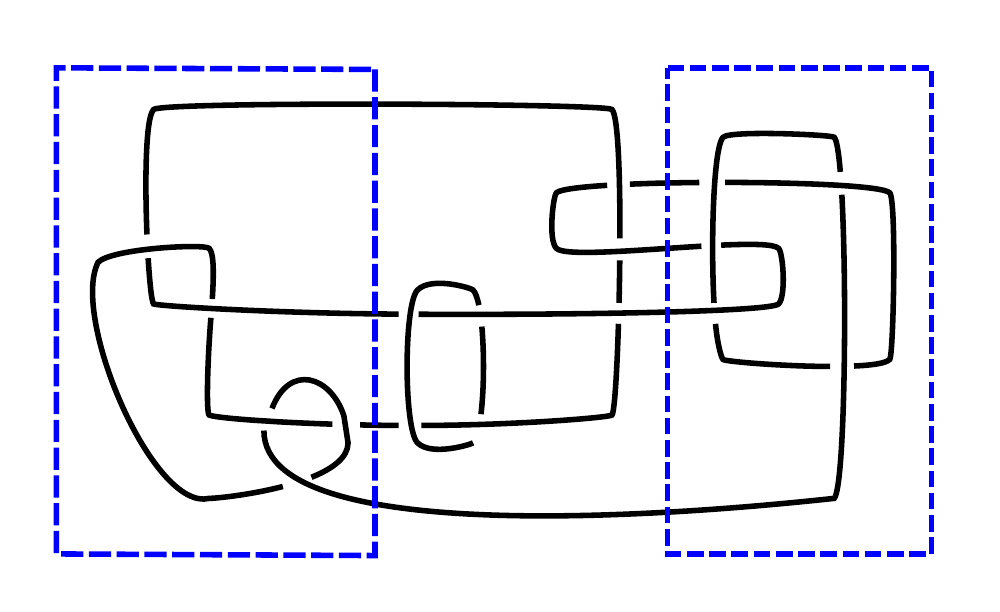}
\caption{The Kinoshita-Terasaka knot 11n42 linked with an unknotted curve $\gamma$.  The two Conway spheres are marked in dotted outline and the arcs of $KT$ cut out by $S_1$ are labeled $a_1,a_2,b_1,b_2$.}
\label{fig:KTgamma}
\end{figure}

Let $S_1,S_2$ denote the Conway spheres in Figure \ref{fig:KTgamma}.  Then $S_1$ decomposes $KT$ into the tangles $T_1,T_2$ while $S_2$ decomposes $KT$ into $R_1,R_2$.  The tangle $T_1$ consists of two arcs $a_1,a_2$ with $a_1$ unknotted and $a_2$ an arc with a right-handed trefoil tied in.  The tangle $T_2$ consists of two arcs $b_1,b_2$, each unknotted.  Similarly, the tangle $R_1$ consists of two arcs $c_1,c_2$, with $c_1$ unknotted and $c_2$ an arc with a left-handed trefoil tied in, while $R_2$ consists of two unknotted arcs $d_1,d_2$.  We can obtain $C$ from $KT$ by mutating the tangle $T_2$.  Note that, in order, the link $KT$ is $a_1,b_1,a_2,b_2$, while the link $C$ is $a_1,b_2,a_2,b_1$.  

\begin{lemma}
There exist exactly 2 essential Conway spheres for $KT$ and $C$.
\end{lemma}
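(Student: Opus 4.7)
The plan is to establish this via the Bonahon--Siebenmann canonical decomposition of prime knots along essential Conway spheres, or equivalently, via the JSJ decomposition of the double branched cover $\Sigma_2(KT)$ together with equivariant uniqueness. Since $S_1$ and $S_2$ are disjoint, cutting along both simultaneously produces three pieces: two 3-ball tangles (one containing the arcs $a_1, a_2$, the other containing $c_1, c_2$), separated by a thickened sphere middle piece carrying the connecting arcs. I will show this triple is the Bonahon--Siebenmann family for $(S^3, KT)$ and appeal to its uniqueness.

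First, I verify that $S_1$ and $S_2$ are essential and non-isotopic. The tangle $T_1$ is non-rational because $a_2$ has a right-handed trefoil tied in, so $S_1$ is neither boundary-parallel nor compressible in the knot exterior; the same argument via the left-handed trefoil in $c_2$ handles $S_2$. The two spheres are non-isotopic because the induced tangle pairs have distinct combinatorial type (only one sphere is adjacent to a right-handed trefoil).

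Second, I check that each of the three pieces produced by $S_1 \cup S_2$ is Bonahon--Siebenmann simple, i.e. contains no further essential Conway sphere. The two outer 3-balls are of the form (unknotted arc) $\sqcup$ (arc carrying a trefoil); their double branched covers can be identified as Seifert fibered spaces over the disk with two exceptional fibers coming from the trefoil, and these contain no incompressible interior torus. The middle $S^2 \times I$ piece carries two linked arcs exhibiting the characteristic Kinoshita--Terasaka pattern; its double branched cover is again a Seifert fibered graph-manifold piece, and direct inspection of its Seifert invariants rules out essential interior tori.

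Finally, by Bonahon--Siebenmann uniqueness (equivalently, JSJ uniqueness in $\Sigma_2(KT)$ combined with the Meeks--Scott equivariant isotopy theorem, lifted back to $(S^3, KT)$), every essential Conway sphere in $(S^3, KT)$ is isotopic to $S_1$ or $S_2$. Because positive mutation preserves the double branched cover and does not alter the Bonahon--Siebenmann simplicity of any piece, the identical conclusion holds for $C$. I expect the main obstacle to be the explicit identification of the Seifert fibered structure of the double branched cover of the middle tangle: pinning down enough of its combinatorics to certify that it admits no additional essential tori (and, equivariantly, no additional Conway spheres) is the only step that is not essentially formal.
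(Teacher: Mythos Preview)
Your strategy via Bonahon--Siebenmann is a legitimate route in principle, but it is \emph{not} the paper's approach and, as written, it contains a real gap rather than merely missing details.

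The paper argues directly and elementarily.  It fixes the single sphere $S_1$, writes the two complementary tangles explicitly in rational--tangle algebra as
\[
T_1 = \left[-\tfrac{1}{3}\right] + \left[\tfrac{1}{2}\right], \qquad
T_2 = [2] \ast \left(\left[\tfrac{1}{3}\right] + \left[-\tfrac{1}{2}\right]\right),
\]
and then classifies the essential twice--punctured disks in each of $T_1$ and $T_2$ (there is a unique one in each).  A hypothetical third Conway sphere $S$ either lies entirely in one $T_i$ (ruled out by the tangle algebra, with $S_2$ being the one sphere that does sit inside $T_2$) or meets $S_1$ in a curve bounding essential disks on both sides; but the unique essential disks in $T_1$ and $T_2$ have boundary curves that partition the four punctures of $S_1$ differently, so they cannot be glued to form a closed sphere.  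No JSJ theory, no branched covers.

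Your proposal, by contrast, cuts along \emph{both} $S_1$ and $S_2$ and tries to certify that each of the three resulting pieces is Bonahon--Siebenmann simple.  The gap is in the middle piece.  Your description of it as ``two linked arcs exhibiting the characteristic Kinoshita--Terasaka pattern'' does not match the actual tangle: from $T_2 = [2] \ast R_1$ the shell between $S_1$ and $S_2$ is essentially the $[2]$ twist region, a four--strand tangle in $S^2 \times I$, not a two--arc KT pattern.  Its double branched cover is (a twisted) $T^2 \times I$, not a Seifert piece with nontrivial base; in the ordinary JSJ of $\Sigma_2(KT)$ this $I$--bundle gets absorbed and the lifts of $S_1$ and $S_2$ become isotopic tori.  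Thus the non--equivariant JSJ sees only one torus, and recovering both Conway spheres requires a genuinely equivariant argument (or the full orbifold version of Bonahon--Siebenmann) that you have not supplied.  Your remark that the outer pieces have Seifert fibered double covers ``with two exceptional fibers coming from the trefoil'' is also off: the exceptional fibers come from the two rational summands $[-1/3]$ and $[1/2]$, not from the trefoil knot type of the arc.

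In short, the machinery you invoke can be made to work, but the piece you flag as ``the main obstacle'' is in fact mis--identified, and the JSJ count you would obtain does not immediately give two spheres.  The paper sidesteps all of this by the essential--disk slope comparison, which is both shorter and avoids the equivariant subtlety entirely.
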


\begin{proof}
Suppose there is a third essential Conway sphere $S$.  The tangles $T_1,T_2$ are prime, so we can assume that either (a) $S$ lies completely within $T_1$ or $T_2$, or (b) it intersects both $T_1$ and $T_2$ along essential 2-punctured disks.  Let $\left[ \frac{p}{q} \right]$ denote the rational tangle determined by the continued fraction expansion of $\frac{p}{q}$.  In the algebra of tangles, we can express the tangles of $KT$ as
\begin{align*}
T_1 &= \left[ -\frac{1}{3} \right] + \left[\frac{1}{2}\right] & T_2 &= \left[ 2 \right] \ast \left( \left[ \frac{1}{3} \right] + \left[ -\frac{1}{2} \right] \right)
\end{align*} 

The tangle $T_1$ contains a unique essential disk, where the rational tangles $[\frac{1}{3}]$ and $[\frac{1}{2}]$ were joined, and no essential Conway spheres.  Similarly, the tangle $T_2$ contains a unique essential disk where the tangles $[2]$ and $R_1$ were joined and on essential spheres.  However, the boundaries of these two disks are not isotopic.  Specifically, the boundary of the disk in $T_1$ separates the endpoints of $a_1$ from the endpoints of $b_1$.  The boundary of the disk in $T_2$ separates the endpoints of $b_1$ from the endpoints of $b_2$.  

Finally, there are the same two corresponding disks in $C$ with the same boundaries up to isotopy.
\end{proof}

Let $KT_{n}$ be the knot obtained by performing $-\frac{1}{n}$-surgery to $\gamma$ and let $C_{n}$ denote the corresponding positive mutant.  Diagrammatically, this surgery corresponds to applying $2n$ half-twists just outside the mutation sphere.  The mutants $KT_{n}$ and $C_n$ have the same Conway spheres.  In particular, each knot $KT_n$ is obtained from the union of $T_1$ and $T_2$, however the gluing map that identifies $S^2$ with $S^2$ is modified by $n$ Dehn twists along $\gamma$.

\begin{align}
KT_n &= T_1 \cup_{\phi_n} T_2 & C_n &= T_1 \cup_{\phi_n \circ \tau} T_2
\end{align}

\begin{lemma}
There exist exactly 2 essential Conway spheres for $KT_n$ and $C_n$ for all $n \in \ZZ$.
\end{lemma}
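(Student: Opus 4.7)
\bigskip

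\noindent\textbf{Proof proposal.}
The plan is to mirror the preceding lemma while tracking how the $n$-fold Dehn twist along $\gamma \subset S_1$ alters the gluing of $T_1$ and $T_2$ along $S_1$. First I would verify that $S_1$ and $S_2$ remain essential Conway spheres in $KT_n$ and $C_n$: the surgery on $\gamma$ is supported in a neighborhood of $\gamma$ that may be taken disjoint from $S_2$, so $S_2$ and its complementary tangles are unchanged; and $S_1$ still separates the knot into the same prime tangles $T_1, T_2$, with only the gluing altered by $T_\gamma^n$, so $S_1$ persists as essential.

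Now suppose $S$ is an essential Conway sphere in $KT_n$ not isotopic to $S_1$ or $S_2$, and isotope $S$ to minimize $|S \cap S_1|$. If $S \cap S_1 = \emptyset$, then $S$ lies in one of $T_1, T_2$; since these tangles are prime (an internal property independent of $n$), this is ruled out. Otherwise $S \cap S_1$ is a nonempty disjoint union of essential simple closed curves on the four-punctured sphere $\Sigma := S_1 \smallsetminus KT_n$, and each component of $S \smallsetminus S_1$ is an essential planar subsurface of $T_1$ or $T_2$. The uniqueness of the essential two-punctured disks $D_1 \subset T_1, D_2 \subset T_2$ from the preceding lemma, combined with an outermost-disk simplification, forces each intersection curve to be isotopic on $\Sigma$ both to $\alpha := \partial D_1$ and to $T_\gamma^n(\beta)$, where $\beta := \partial D_2$.

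The proof thus reduces to checking that $T_\gamma^n([\beta]) \ne [\alpha]$ as isotopy classes of essential curves on $\Sigma$ for every $n \in \ZZ$. Such curves are classified by their slopes in $\mathbb{Q} \cup \{\infty\}$, and the tangle presentations $T_1 = [-1/3] + [1/2]$ and $T_2 = [2] \ast ([1/3] + [-1/2])$ pin down the slopes of $\alpha, \beta, \gamma$ explicitly. The Dehn twist $T_\gamma$ acts on slopes by a fixed M\"obius transformation, producing an arithmetic orbit of $[\beta]$ that one verifies misses $[\alpha]$. The case of $C_n$ follows by precomposing the gluing with the mutation involution, which permutes isotopy classes on $\Sigma$ compatibly with $T_\gamma$.

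The main obstacle I expect is this final slope bookkeeping, together with confirming that the outermost-disk reduction genuinely forces every intersection curve to come from the unique disks $D_1, D_2$ rather than more exotic planar pieces (annular or otherwise) inside $T_1$ or $T_2$.

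\bigskip
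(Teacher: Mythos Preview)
Your outline is correct and matches the paper's structure: reduce a hypothetical third Conway sphere to the question of whether the unique essential twice-punctured disks $D_1 \subset T_1$ and $D_2 \subset T_2$ can be glued along $S_1$ after the modified gluing map $\phi_n$. The paper's proof is terser because it takes the reduction (your steps 1--7) for granted from the preceding lemma, noting only that $KT_n$ and $C_n$ are built from the \emph{same} tangles $T_1,T_2$, so the uniqueness of $D_1,D_2$ carries over verbatim.

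The substantive difference is your final step. You propose to compute the slopes of $\alpha = \partial D_1$, $\beta = \partial D_2$, $\gamma$ explicitly and verify that the arithmetic orbit $\{T_\gamma^n[\beta]\}_{n \in \ZZ}$ misses $[\alpha]$. The paper avoids this computation entirely by using a coarser invariant: the \emph{partition} of the four punctures that an essential curve on $\Sigma$ determines. From the earlier lemma, $\partial D_1$ separates the endpoints of $a_1$ from those of $a_2$, while $\partial D_2$ separates the endpoints of $b_1$ from those of $b_2$---these are different partitions. Since any power of $T_\gamma$ is a homeomorphism of $\Sigma$ fixing the punctures pointwise, it preserves the partition associated to any curve; hence $T_\gamma^n(\partial D_2)$ can never be isotopic to $\partial D_1$, for any $n$. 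This one-line observation replaces your slope bookkeeping and handles the mutation case simultaneously. Your concern about annular or other exotic planar pieces is legitimate in principle, but the paper (and you) are implicitly relying on the primeness of $T_1,T_2$ established earlier to force all essential pieces to be the unique twice-punctured disks.
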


\begin{proof}
$KT_n$ and $C_n$ are comprised of the same pair of tangles $T_1,T_2$ as $KT$ and $C$.  Thus, there are unique essential disks with boundaries in the mutation sphere.  However, the boundaries of the disks determine different partitions of the 4 points where the knot intersects $S_1$.  Thus, the Dehn twists along $\gamma$ cannot match up the boundaries to give a closed sphere.
\end{proof}

To distinguish $KT_n$ from $C_n$, we will adopt a strategy similar to the one in \cite{Cochran-Ruberman} \footnote{I would also like to thank Chuck Livingston for providing an unpublished draft of \cite{Cochran-Ruberman}.}.  Let $L$ be an ordered, oriented 2-component link and label the components $x$ and $y$.  When $L$ has linking number 0, Cochran \cite{Cochran} defined a sequence of higher-order linking invariants $\beta^i_{x},\beta^i_y$ for $i \geq 0$.  The invariants $\beta^1_x(L) = \beta^1_y(L)$ are the Sato-Levine invariant.  The higher invariants are defined inductively by taking `derivatives' of the original link L as follows.  Since $lk(x,y) = 0$, we can choose a Seifert surface $F$ for $x$ disjoint from $y$ and similarly a Seifert surface $G$ for $y$ disjoint from $x$.  We can further assume that $F$ and $G$ intersect transversely along a knot $K$.  The `partial derivatives' of $L$ are the links
\begin{align}
D_x(L) &:= x \cup K & D_y(L) &:= y \cup K
\end{align}
The derived links also have linking number 0 and therefore the process can be iterated.  Cochran's higher order linking invariants are then  defined inductively by
\begin{align}
\beta^{i+1}_x(L) &:= \beta^i_x(D_x(L)) & \beta^{i+1}_y(L) &:= \beta^i_y(D_y(L))
\end{align}
In general, these higher order invariants are manifestly non-symmetric in $x$ and $y$.

In \cite{Cochran-Ruberman}, Cochran and Ruberman apply the higher-order linking invariants to define invariants of tangles.  Let $T$ be a tangle consisting of an ordered pair of two disjoint arcs in $B^3$.  Let $C(T)$ be any 2-component rational closure of $T$.  That is, $C(T)$ is the union of $T$ and $(B^3,T_0)$, the tangle consisting of two boundary-parallel arcs.  The difference
\begin{align}
I^i(T) &:= \beta^i_x(C(T)) - \beta^i_y(C(T))
\end{align}
is well-defined and independent of the choice of rational closure $C(T)$ \cite[Theorem 4.1]{Cochran-Ruberman}.  Therefore it defines an invariant of the tangle T for each $i \geq 0$.   Reversing the ordering of the arcs of $T$ changes the sign of $I^i(T)$ for all $i$.  Thus, it follows that if there is a diffeomorphism of $T$ that exchanges the two arcs, then $I^i(T) = 0$ for all $i \geq 0$ \cite[Lemma 4.3]{Cochran-Ruberman}.

\begin{figure}
\centering
\labellist
	\small\hair 2pt
	\pinlabel $K$ at 190 400
	\pinlabel $X$ at 120 540
	\pinlabel $Y$ at 430 560
\endlabellist
\includegraphics[width=.45\textwidth]{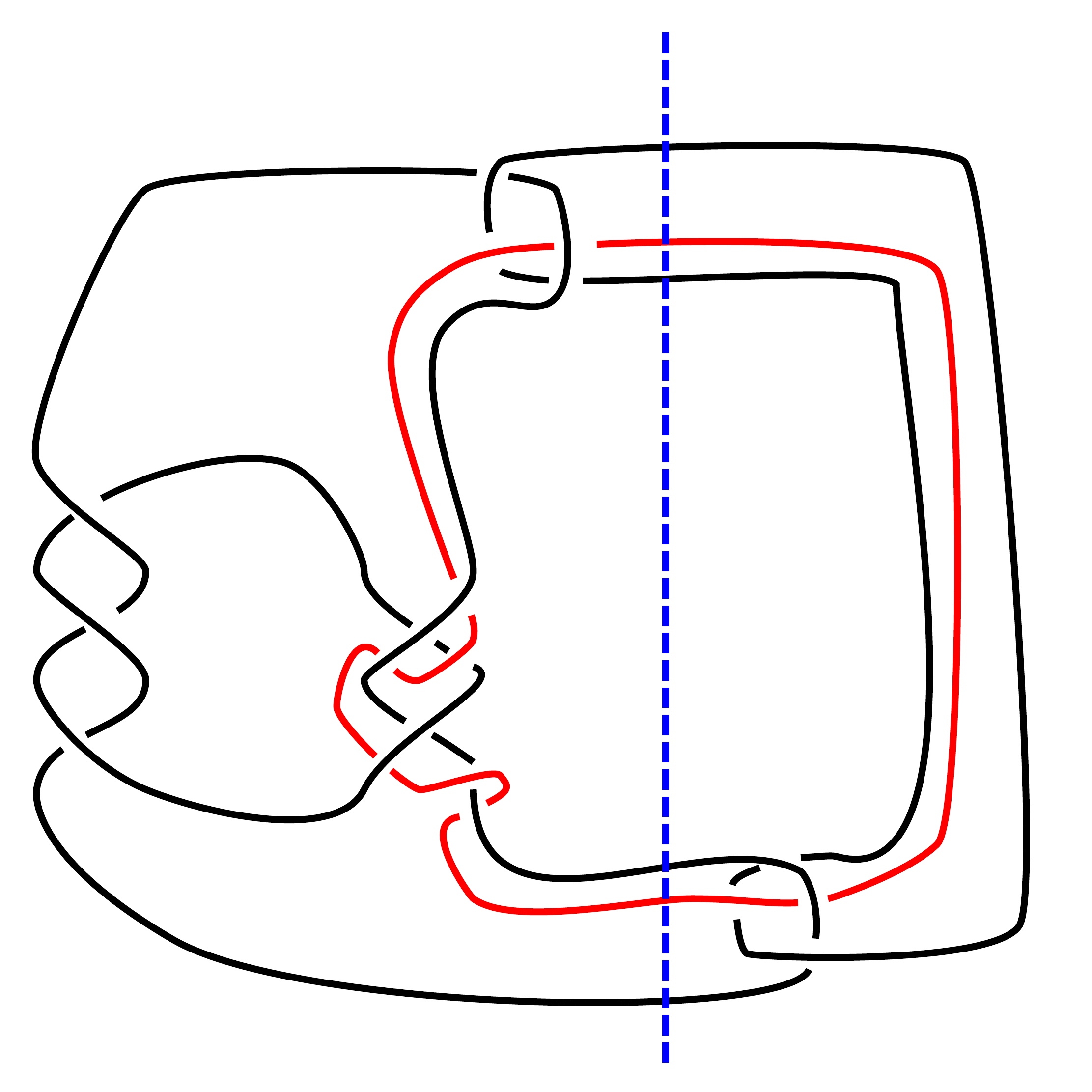}
\caption{A rational closure $L$ of the tangle $T_2$.}
\label{fig:Qclosure}
\end{figure}

\begin{lemma}
Let $T_2$ and $R_2$ be the tangles for the KT knots.  Then $I^2(T_2)$ and $I^2(R_2)$ are nonzero.
\end{lemma}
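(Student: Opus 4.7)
The plan is a direct diagrammatic calculation following the method of \cite{Cochran-Ruberman}. First, I would choose a convenient rational closure $L = C(T_2)$ — for instance the numerator closure joining the four endpoints of $T_2$ in pairs by unknotted, unlinked arcs. Using the continued-fraction expression $T_2 = [2] \ast ([\frac{1}{3}] + [-\frac{1}{2}])$, one reads off from the diagram that the resulting link $L = x \cup y$ has $\mathrm{lk}(x,y) = 0$, so the higher-order Cochran invariants $\beta^i_x(L)$ and $\beta^i_y(L)$ are defined, and by \cite[Theorem 4.1]{Cochran-Ruberman} the quantity $I^i(T_2)$ will be independent of this choice.

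Next I would construct explicit disjoint Seifert surfaces $F$ for $x$ and $G$ for $y$ using Seifert's algorithm on the chosen diagram, then perturbing to make them transverse. Their intersection is a $1$-manifold $K \subset S^3$, and this yields the first derivatives $D_x(L) = x \cup K$ and $D_y(L) = y \cup K$. After checking that each is again a 2-component link with vanishing linking number, one iterates the construction to obtain the second-order intersection curves needed to evaluate $\beta^2_x(L) = \beta^1_x(D_xL)$ and $\beta^2_y(L) = \beta^1_y(D_yL)$ as self-linking numbers of curves sitting in Seifert surfaces. The essential geometric input is that the arcs $b_1$ and $b_2$ sit asymmetrically with respect to the sub-tangle $[\frac{1}{3}] + [-\frac{1}{2}]$: the writhe-like contributions of those rational factors accumulate on one of the two derived intersection curves and not the other. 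An explicit count of signed intersections then produces distinct integers, giving $\beta^2_x(L) - \beta^2_y(L) \ne 0$, hence $I^2(T_2) \ne 0$.

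The argument for $R_2$ is formally identical after mirroring the chirality conventions, since $R_2$ is (up to relabeling its arcs) the mirror image of $T_2$; signs flip throughout, but the difference remains nonzero. I expect the main obstacle to be the diagrammatic bookkeeping: selecting Seifert surfaces whose transverse intersection can be identified under isotopy, tracking orientations and sign conventions through two derivative operations, and confirming the final Sato-Levine integer is nonzero. No new idea beyond those of \cite{Cochran} and \cite{Cochran-Ruberman} is required, but the calculation is sufficiently intricate that maintaining a sequence of carefully annotated diagrams is indispensable.
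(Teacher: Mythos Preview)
Your approach is the same as the paper's: compute $I^2(T_2)$ via a rational closure, build Seifert surfaces, take the derivative links, and evaluate Sato--Levine invariants; the mirror argument for $R_2$ is also what the paper does. The one substantive shortcut the paper finds, which your outline leaves as ``diagrammatic bookkeeping,'' is that with a well-chosen closure the derived link $D_x(L)$ is precisely the Whitehead link while $D_y(L)$ is the unlink, so $\beta^2_y(L)=0$ automatically and $\beta^2_x(L)$ is computed from a single crossing-change formula for $\beta^1$ rather than by iterating the surface-intersection construction.
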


\begin{proof}
$R_2$ is the mirror of $T_2$, so it suffices to prove the statement for $T_2$.  Take the rational closure $L$ of $T_2$ in Figure \ref{fig:Qclosure} and let $x$ and $y$ denote the components labeled in the figure.  Both components are unknots and the linking number is 0.   Let $F$ and $G$ be the Seifert surfaces of $x$ and $y$, respectively, obtained by Seifert's algorithm.  Then $F$ intersects $y$ in two points.  We can remove this intersection by tubing between the intersection points along an arc of $y$ connecting them.  Let $F'$ denote this new surface.  Similarly, $G$ intersects $x$ in two points and by adding a tube we can choose $G'$ disjoint from $x$.  Then $K = F' \pitchfork G'$ is the union of 0-framed pushoffs of the arcs in $x$ and $y$ corresponding to the tubes.  See Figure \ref{fig:Qclosure}.

The link $D_x(L) = x \cup K$ is isotopic to the Whitehead link, while the link $D_y(L)$ is the unlink.  Let $W$ denote $D_x(L)$ and fix an orientation on $W$.  Let $V$ be obtained by changing one positive crossing of $W$ to a negative crossing and let $Z$ be the oriented 0-resolution of $W$ at this crossing.  The oriented resolution splits $x$ into two components $x_1,x_2$.  Let $Z_1,Z_2$ be the 2-component sublinks of $Z$ consisting of $x_1$ and $K$ or $x_2$ and $K$, respectively.  Then the crossing change formula for the Sato-Levine invariant implies that
\[\beta^1(V) - \beta^1(W) = \text{lk}(Z_1)\cdot \text{lk}(Z_2)\]
The linking numbers of $Z_1$ and $Z_2$ are nonzero while $V$ is the unlink, so $\beta^1(W) = \beta^2_x(L) \neq 0$.  However, $D_y(L)$ is the unlink and so $\beta^2_Y(L) = 0$.  Thus $I^2(T_2) = \beta^2_x(L) - \beta^2_y(L) \neq 0$.  
\end{proof}

\begin{proposition}
\label{prop:KTC-nonisotopic}
For all $n$, the knots $KT_n$ and $C_n$ are not isotopic.
\end{proposition}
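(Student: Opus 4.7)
The plan is to argue by contradiction using Cochran and Ruberman's tangle invariant $I^2$. Assume there is an ambient isotopy $f\colon S^3\to S^3$ with $f(KT_n)=C_n$. By the preceding lemma, each of $KT_n$ and $C_n$ contains exactly two essential Conway spheres, so $f$ must send the pair $\{S_1,S_2\}$ inside $KT_n$ to the pair $\{S_1,S_2\}$ inside $C_n$, up to isotopy. This gives a finite case analysis according to how $f$ identifies the Conway spheres and hence the tangle decompositions.

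The strategy is to rule out each case with an $I^2$-computation. The key inputs are: $I^2(T_2)\neq 0$ and $I^2(R_2)\neq 0$ from the preceding lemma; that $I^2$ changes sign when the two arcs of an ordered tangle are interchanged; and that $I^2$ is an isotopy invariant of the tangle rel boundary \cite[Theorem 4.1]{Cochran-Ruberman}. The positive mutation $\tau$ acts on a Conway sphere by a diffeomorphism that exchanges the endpoints of the arcs, so replacing $KT_n$ by $C_n$ shifts the relevant tangle invariant by an overall sign. A hypothetical $f$ would therefore identify tangles whose $I^2$-values differ by this sign, producing in each case a boundary-fixing diffeomorphism between a tangle and itself that exchanges its two arcs. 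By Cochran-Ruberman's Lemma 4.3, such a diffeomorphism forces $I^2=0$ on the tangle in question, contradicting the preceding lemma.

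Before running the case analysis I would check two compatibility points. First, since $\tau$ fixes the surgery curve $\gamma$ pointwise, the $-1/n$ surgery commutes with the mutation, so $KT_n$ and $C_n$ really are positive mutants along both $S_1$ and $S_2$ for every $n$; in particular the sign relation induced by $\tau$ on $I^2$ carries over unchanged from the base case. Second, although $I^2$ is only sign-ambiguous on an unordered tangle, the ambient knot together with its Conway sphere canonically determines the ordering of the arcs up to this global sign, so the bookkeeping closes up.

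The main obstacle will be the case in which $f(S_1)$ is isotopic to $S_2$ rather than to $S_1$: here one must identify the tangle $T_2$ (or $T_1$) in $KT_n$ with one of $R_1,R_2$ in $C_n$ and confirm that the mutation-induced sign discrepancy remains visible after this swap. Because $R_2$ is the mirror of $T_2$, the corresponding invariants are related in a controlled way, and the previously established nonvanishing of both $I^2(T_2)$ and $I^2(R_2)$ is exactly what keeps the contradiction alive across this swap. This argument closely parallels the strategy used in \cite{Cochran-Ruberman} to distinguish the original pair $KT,C$.
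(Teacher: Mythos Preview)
Your overall architecture---assume an isotopy, use the uniqueness of the two Conway spheres to get a finite case analysis, and derive a contradiction from $I^2(T_2)\neq 0$---matches the paper. But the paper does \emph{not} run the whole case analysis with $I^2$; it uses the local knot types of the individual tangle arcs to prune the cases first, and only then applies $I^2$ in the single remaining case. Your proposal omits this pruning step, and that is a genuine gap.

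Concretely, recall that in $T_1$ the arc $a_2$ carries a right-handed trefoil, in $R_1$ the arc $c_2$ carries a left-handed trefoil, and every other arc ($a_1,b_1,b_2,c_1,d_1,d_2$) is unknotted. The paper uses this as follows. If the isotopy swapped $S_1$ and $S_2$, then $a_2$ would have to land on one of $c_1,c_2,d_1,d_2$; none of these carries a right-handed trefoil, so this case is impossible outright---no $I^2$ needed. If the isotopy fixes $S_1$, then among $a_1,a_2,b_1,b_2$ only $a_2$ is knotted, so $f$ must fix $a_2$ and hence $a_1$; this forces $f(T_1)=T_1$ and $f(T_2)=T_2$. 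Only now does the mutation bookkeeping show that $f|_{T_2}$ exchanges $b_1$ and $b_2$, and $I^2(T_2)\neq 0$ gives the contradiction.

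Your plan to handle the swap case ``because $R_2$ is the mirror of $T_2$'' and because $I^2(R_2)\neq 0$ does not obviously close. Even granting a controlled relation between $I^2(T_2)$ and $I^2(R_2)$, a diffeomorphism $T_2\to R_2$ that exchanges arcs would only yield $I^2(T_2)=-I^2(R_2)$, which is not a contradiction by itself. More seriously, in the fix-$S_1$ case you still need to exclude $f(T_1)=T_2$; nothing in your outline rules this out, and the paper does not compute $I^2(T_1)$, so a pure $I^2$ argument is not available from the stated lemmas. The arc-knot-type observation is exactly what supplies these missing exclusions, and you should incorporate it.
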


\begin{proof}
Suppose there is an isotopy of $KT_n$ to $C_n$.  We can assume that this isotopy acts transitively on the set of Conway spheres.  Since there are two such spheres, the isotopy either fixes them or swaps them.

If the isotopy swaps the Conway spheres, then the arc $a_2$ must be sent to one of the arcs $c_1,c_2,d_1,d_2$.  However, the arc $a_2$ contains a right-handed trefoil and none of the latter four arcs do.  Thus, it is impossible for an orientation-preserving diffeomorphism to send $S_1$ to $S_2$ and the isotopy must fix the Conway spheres.

Of the four arcs $a_1,a_2,b_1,b_2$, only $a_2$ is knotted so the isotopy must send $a_2$ to itself and therefore $a_1$ to itself as well.  Consequently the isotopy must send the tangles $T_1,T_2$ to themselves.  However, the isotopy now must exchange $b_1$ and $b_2$ since the mutation exchanged these arcs.  This implies there is a diffeomorphism from $T_2$ to itself that exchanges the arcs.  But this is a contradiction since $I^2(T_2) \neq 0$.
\end{proof}

\begin{remark}
Proposition \ref{prop:KTC-nonisotopic} also shows that the standard Kinoshita-Terasaka and Conway knots are not isotopic.
\end{remark}

\begin{proof}[Proof of Theorem \ref{thrm:pos-mutants-hfk}]
Let $\widehat{KT}$ be the union of $KT$ with $\gamma$ and let $\widehat{C}$ be the union of $C$ with $\gamma$.  A computation in SnapPy \cite{SnapPy} shows that the links $\widehat{KT}$ and $\widehat{C}$ are hyperbolic with volume $\sim 23.975$.  Thus, by the hyperbolic Dehn surgery theorem, surgery on $\gamma$ with slope $-\frac{1}{n}$ is hyperbolic for all but finitely many values of $n$.  Consequently, for $|n|$ sufficiently large the knot $KT_n$ is hyperbolic and thus prime.  Again for $|n|$ sufficiently large, Theorem \ref{thrm:mutants-isom} states that there is a bigraded isomorphism
\[\HFKh(KT_{n}) \cong \HFKh(C_{n})\]
Thus $\{(KT_n,C_n)\}$ for $|n| \gg 0$ is the required family of prime mutants. 
\end{proof}

\begin{remark}
It is possible that $|n| = 1$ is sufficiently large for the family in Theorem \ref{thrm:pos-mutants-hfk}.  A computation with the {\it py{\_}hfk} Python module \cite{py-hfk} shows that all four of the knots $KT_{1},C_1,KT_{-1},C_{-1}$ are $\HFKh$ homologically thin.  Thus, the Alexander polynomial determines the knot Floer homology and
\[\HFKh(KT_1) \cong \HFKh(C_1) \qquad \HFKh(KT_{-1}) \cong \HFKh(C_{-1})\]
\end{remark}

\subsection{Concordance invariants}

The knot Floer group $\HFKm(K)$ contains a well-known concordance invariant $\tau(K)$ \cite{OS-4ball}.  The free part $\HFKm(K)/\text{Tors}$ is isomorphic to the polynomial ring $\FF[U]$ and $-\tau(K)$ is the maximal grading of a nontorsion element.  More specifically,
\[\tau(K) :=  - \text{max}\left \{A(\x)) : \x \text{ is not $U^k$-torsion for any $k > 0$}\right\}  \]
More generally, if $L$ is a 2-component link, we can choose a pair of elements $x_1,x_2$ with homogeneous bigradings that generate $\HFKm(L)/\text{Tors} \cong \FF[U]^2$.  The {\it $\tau$-set} of $L$ is the set $\tau(L) = \{-A(\x_1),-A(\x_2)\}$.

\begin{lemma}
\label{lemma:tau-facts}
 Let $K$ be a knot, let $L$ a 2-component link, and suppose $K$ can be obtained from $L$ by an elementary merge cobordism.  Then
\[\tau(K) \in \tau(L)\]
\end{lemma}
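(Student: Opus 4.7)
The plan is to exploit the $U$-equivariant chain map on $\CFKm$ induced by the elementary merge saddle cobordism, and compare the free (non-torsion) parts of the two knot Floer groups.

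I would first associate to the merge cobordism $W \colon L \to K$ a $U$-equivariant chain map $F_W \colon \CFKm(L) \to \CFKm(K)$ from the link cobordism TQFT (Juh\'asz--Marengon, or Ozsv\'ath--Szab\'o, or equivalently via a direct Heegaard-diagrammatic description of the band move).  Since $W$ is a pair of pants with $\chi(W)=-1$, the Alexander and Maslov grading shifts of $F_W$ are pinned down, and under the collapsed Alexander grading used in this paper $F_W$ can be arranged to be Alexander-grading preserving on homogeneous elements.  I would then pair $F_W$ with the dual split cobordism map $F_{W'} \colon \CFKm(K) \to \CFKm(L)$.  By functoriality, the composition $F_W \circ F_{W'}$ is the cobordism map for the low-genus surface $W \cup W'$; reducing modulo $U$ recovers a cobordism map on $\widehat{HF}(S^3) \cong \FF$, which equals the identity because the underlying $3$-manifold never changes.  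Consequently $F_W \circ F_{W'}$ is non-trivial on $\HFKm(K)/\mathrm{Tors}$, and hence so is $F_W$ on $\HFKm(L)/\mathrm{Tors}$.

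Next, I would pick homogeneous generators $x_1, x_2$ of $\HFKm(L)/\mathrm{Tors} \cong \FF[U]^2$ and $y$ of $\HFKm(K)/\mathrm{Tors} \cong \FF[U]$.  Since $F_W$ is $U$-equivariant and homogeneous on bigradings, $F_W(x_i) \equiv c_i U^{k_i} y \pmod{\mathrm{Tors}}$ for scalars $c_i \in \FF$ and integers $k_i \geq 0$.  The non-triviality of $F_W \circ F_{W'}$ on $\HFKm(K)/\mathrm{Tors}$ forces the image of $F_W$ modulo torsion to contain a unit multiple of $y$, so $\min(k_1, k_2) = 0$.  For such an index $i$ we obtain $F_W(x_i) \equiv c_i y \pmod{\mathrm{Tors}}$ with $c_i \neq 0$, and Alexander-grading preservation gives $A(x_i) = A(y)$, whence $-A(x_i) = -A(y) = \tau(K)$.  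This proves $\tau(K) \in \tau(L)$.

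The main obstacle is controlling the image of $F_W$ on the free parts.  Mere non-vanishing of $F_W$ is not sufficient: a priori both $c_i U^{k_i}$ could lie in $U \cdot \FF[U]$, in which case $F_W$ would miss the top of $\HFKm(K)/\mathrm{Tors}$ and fail to detect $\tau(K)$ directly.  The dual cobordism $W'$ together with the functoriality of link cobordism maps is precisely what rules this out, pinning the image of $F_W$ down to a unit times $y$ at the correct Alexander grading.  Once this non-degeneracy and the Alexander-grading shift are in hand, the rest is $\FF[U]$-linear algebra combined with bigrading homogeneity.
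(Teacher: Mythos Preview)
The paper's proof is much shorter than yours: it simply cites three inequalities from \cite[Chapter 8]{OSS-book},
\[
\tau(K)-1 \le \tau_{\min}(L) \le \tau(K), \qquad \tau(K) \le \tau_{\max}(L) \le \tau(K)+1, \qquad \tau_{\max}(L)-\tau_{\min}(L) \le 1,
\]
and observes that these force either $\tau_{\min}(L)=\tau(K)$ or $\tau_{\max}(L)=\tau(K)$.

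Your approach via cobordism maps is essentially the machinery underlying those cited inequalities, so it is not really a different route so much as an attempt to reprove the black box. As written, though, there is a genuine gap at the step you yourself flagged as the main obstacle.

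First, ``reducing modulo $U$'' in $\CFKm(K)$ yields $\CFKh(K)$, not $\widehat{CF}(S^3)$; those are very different groups. The correct link to $S^3$-Floer homology is through the Alexander filtration on $CF^-(S^3)$ (equivalently, inverting $U$), not setting $U=0$.

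Second, and more seriously, even if one grants that $F_W\circ F_{W'}$ is nonzero on $\HFKm(K)/\mathrm{Tors}$, this does \emph{not} force the image of $F_W$ to contain a unit multiple of $y$. The glued surface $W\cup W'$ is a split followed by a merge, hence has $\chi=-2$ and genus $1$; with any reasonable decoration the induced map on $\HFKm(K)/\mathrm{Tors}\cong\FF[U]$ is multiplication by $U$ (or zero), not the identity. So from $F_W\circ F_{W'}\equiv U^m$ with $m\ge 1$ you only learn that some $k_i+l_i=m$, which is perfectly compatible with $\min(k_1,k_2)\ge 1$. Your non-degeneracy claim therefore does not follow.

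To make the cobordism-map argument go through, one must analyze the Alexander grading shift of $F_W$ and of $F_{W'}$ \emph{separately}, together with their individual nontriviality on the $U$-towers, to obtain the two-sided bound $\tau_{\min}(L)\le \tau(K)\le \tau_{\max}(L)$; combining this with the bound on $\tau_{\max}(L)-\tau_{\min}(L)$ then finishes. That is exactly what the OSS book does, and citing it, as the paper does, is the cleaner path.
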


\begin{proof}
The statement follows easily from the following three inequalities, proven in \cite[Chapter 8]{OSS-book}.  Since $L$ and $K$ are related by an elementary saddle move, their $\tau$ values satisfy the inequalities
\[\tau(K) - 1 \leq \tau_{min}(L) \leq \tau(K) \qquad \tau(K) \leq \tau_{max}(L) \leq \tau(K) + 1\]
In addition, since $L$ has 2 components, the maximum and minimum values satisfy
\[\tau_{max}(L) - \tau_{min}(L) \leq 2-1 = 1\]
\end{proof}

\begin{theorem}
\label{thrm:tau-invariance}
Let $K$ be a knot with an essential Conway sphere as in Figure \ref{fig:Lkl} and let $K' = K_{1,-1}$ be its positive mutant.  For all $n \in \ZZ$, the knots $K_{2n,0}$ and $K'_{2n,0} = K_{2n+1,-1}$ are positive mutants and for $|n| \gg 0$, we have
\[\tau(K_{2n,0}) = \tau(K'_{2n,0})\]
\end{theorem}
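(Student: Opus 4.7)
The plan is to lift the argument for Theorem~\ref{thrm:mutants-isom} from $\HFKh$ to the minus-version knot Floer homology $\HFKm$, since the concordance invariant $\tau(K)$ is determined by the graded $\FF[U]$-module structure of $\HFKm(K)$. The key observation is that the chain-level bijection $\psi$ of Proposition~\ref{prop:chain-bijection} should extend to the minus complex: $\psi$ is defined on intersection points and extends $\FF[U]$-linearly, while Lemma~\ref{lemma:homogeneous-grading} already guarantees that $n_w(\phi') = n_w(\phi)$ for corresponding Whitney disks whose endpoints both lie in $\mathfrak{G}(\SH_n)_- \cup \mathfrak{G}(\SH_n)_{twist}$. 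Thus the $U$-power coefficients in the differential are preserved, and the Localization Principle argument from Proposition~\ref{prop:chain-bijection} transfers verbatim, since the holomorphic disks remain supported in a neighborhood disjoint from the twisting curve $\gamma$.

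From this chain-level bijection, I would deduce an analog of Lemma~\ref{lemma:skein-maximal} for $\HFKm$: for $|n|$ sufficiently large, the minus-version skein exact triangle splits as a short exact sequence of graded $\FF[U]$-modules. The rank-counting inequalities from the proof of Lemma~\ref{lemma:skein-maximal} translate once one has the $\HFKm$ analog of Proposition~\ref{prop:ranks-stabilization}, which follows by the same Euler-characteristic and dimension-bounding arguments. Given the $\HFKm$ splitting, the proof of Theorem~\ref{thrm:mutants-isom} applies verbatim at the minus level, yielding
\begin{align*}
\HFKm(\cL_{2n, 0}) &\simeq \HFKm(\cL_{2n-1, 0})[1] \oplus \HFKm(\cL_{2n+1, 0}), \\
\HFKm(\cL_{2n+1, -1}) &\simeq \HFKm(\cL_{2n+1, -2})[1] \oplus \HFKm(\cL_{2n+1, 0})
\end{align*}
(or the same statements with $V$-tensor factors on the left, depending on the component structure at the $n^{\text{th}}$-crossing). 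Combined with the flype isotopy $\cL_{2n-1, 0} \simeq \cL_{2n+1, -2}$, these give an isomorphism $\HFKm(\cL_{2n, 0}) \cong \HFKm(\cL_{2n+1, -1})$ of graded $\FF[U]$-modules, and reading off the Alexander grading of the generator of the quotient by $U$-torsion yields $\tau(\cL_{2n, 0}) = \tau(\cL_{2n+1, -1})$.

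The main obstacle is the extension of the skein splitting from $\HFKh$ to $\HFKm$. The minus-version skein triangle has a different $V$-factor structure and additional grading shifts compared to the hat version, and one must carefully track the $\FF[U]$-module structure so that the splitting is compatible with the $U$-action. An alternative that avoids running the minus skein machinery directly is to work at the level of the filtered chain complex $\CFK^{\infty}$: the chain bijection $\psi$ should induce a filtered chain isomorphism on the relevant portion of $\CFK^{\infty}$, and this is sufficient information to compute $\tau(\cL_{2n,0})$ and $\tau(\cL_{2n+1,-1})$ and compare them via the flype identification.
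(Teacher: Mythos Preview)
Your approach is genuinely different from the paper's, and the obstacle you flag at the end is real enough that the argument as written does not go through.

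The paper never lifts anything to $\HFKm$. Instead, it uses only the $\HFKh$ stabilization (Theorem~\ref{thrm:stabilization}) together with cobordism inequalities for $\tau$. The key ingredients are: (i) for a two-component link $L$ related to a knot $K$ by an elementary merge, one has $\tau(K)\in\tau(L)$, where $\tau(L)$ is the two-element $\tau$-set (Lemma~\ref{lemma:tau-facts}); (ii) the stabilization theorem confines $\HFKh_0(K_{2n,0},s)$ to an interval $[a+n,b+n]$, so the sequence $c_n:=\tau(K_{2n,0})-n$ is monotone and bounded, hence eventually constant equal to some $C$ (and similarly $C'$ for the mutant); (iii) the link $L_n=K_{2n+1,0}$ admits merge cobordisms both to $K_{2n,0},K_{2n+2,0}$ and, via the flype, to $K'_{2n,0},K'_{2n+2,0}$, forcing $\tau(L_n)=\{C+n,C+n+1\}=\{C'+n,C'+n+1\}$ and hence $C=C'$.

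Your route has two concrete problems. First, the extension of $\psi$ to $\CFKm$ is not as direct as you suggest: the subspace $\CFKm(\SH_n)_{\leq 1}$ contains elements $U^I\cdot\x$ with $\x\in\mathfrak{G}(\SH_n)_+$, and for disks with one endpoint in $\mathfrak{G}_+$ and one in $\mathfrak{G}_-\cup\mathfrak{G}_{twist}$, Lemma~\ref{lemma:homogeneous-grading} shows that $n_z$ and $\mu$ \emph{do} change under the Dehn twist. So the components of $\partial^-$ mixing $\mathfrak{G}_+$ with the rest are not preserved by $\psi$, and the Localization argument no longer applies to those disks. Second, and more seriously, even if you established the $\HFKm$ analog of Proposition~\ref{prop:ranks-stabilization}, the rank-counting in Lemma~\ref{lemma:skein-maximal} only shows that one map in the triangle vanishes; this yields a short exact sequence of graded $\FF[U]$-modules, not a direct-sum splitting. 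Extensions of $\FF[U]$-modules need not split, so you cannot conclude $\HFKm(\cL_{2n,0})\cong\HFKm(\cL_{2n-1,0})[1]\oplus\HFKm(\cL_{2n+1,0})$ as $\FF[U]$-modules from the short exact sequence alone. Without that module-level splitting, the free-part generator's Alexander grading---and hence $\tau$---cannot be read off. The paper's cobordism-inequality argument sidesteps both issues entirely.
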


\begin{proof}
From Theorem \ref{thrm:stabilization}, we can conclude that there exist integers $a,b$ such that for $n \gg 0$, if $\HFKh_0(K_{2n,0},s)$ is nontrivial, then $s$ lies in the interval $[a + n,b + n]$.  Moreover, Theorem \ref{thrm:mutants-isom} implies that $\HFKh_0(K'_{2n,0},s)$ is supported in the same interval of Alexander gradings.  

Set $c_n := \tau(K_{2n,0}) - n$ and consider the sequence $\{c_n\}$ for $n \gg 0$.  Since $\tau(K_{2n+2,0}) - \tau(K_{2n,0}) \leq 1$, the sequence is monotonically decreasing.  It is also bounded from below by $a$ and thus $\lim_{n \rightarrow \infty} c_n = C$ exists.  Define the sequence $\{c'_n\}$ and limit $C'$ similarly for the family $\{K'_{2n,0}\}$.  

Choose $n_0$ so that $\tau(K_{2n,0}) = C + n$ and $\tau(K'_{2n,0}) = C' + n$ for all $n \geq n_0$.  Set $L_n = K_{2n+1,0}$.  There are elementary merge cobordisms from $L_n$ to $K_{2n,0}$ and $K_{2n+2,0}$ given by resolving and introducing a positive crossing, respectively.  Thus $\tau(L_n) = \{C+n,C+n+1\}$.  There are also elementary merge cobordisms from $K_n$ to $K_{2n+1,1} = K'_{2n+2,0}$ and $K_{2n+1,-1} = K'_{2n}$ given by introducing a positive and negative crossing, respectively.   Thus, $\tau(L_n) = \{C' + n, C' + n + 1\}$.  Clearly $C = C'$ and the statement for $n > 0$ follows immediately.  The statement for $n < 0$ follows by an identical argument.
\end{proof}

\begin{proof}[Proof of Theorem \ref{thrm:mutants-isom-hfk}]
The theorem is a combination of Theorems \ref{thrm:mutants-isom} and \ref{thrm:tau-invariance}, along with the easy corollary that $\cL_n$ and $\cL'_n$ have the same genera because the Seifert genus is exactly the highest Alexander grading supporting $\HFKh$.
\end{proof}

The following proposition is also an easy consequence of Lemma \ref{lemma:tau-facts}.  We do not need it to prove Theorem \ref{thrm:tau-invariance} but it may be interesting in its own right.

\begin{proposition}
Let $K$ be a knot with an essential Conway sphere as in Figure \ref{fig:Lkl} and let $K' = K_{1,-1}$ be its positive mutant.  Furthermore, set $L = K_{1,0} = K'_{0,1}$ and $L' = K_{0,1} = K'_{1,0}$.  Then either
\[\tau(K) = \tau(K')\]
or 
\[\tau(L) = \tau(L')\]
\end{proposition}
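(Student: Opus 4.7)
The plan is to apply Lemma \ref{lemma:tau-facts} to four elementary merge saddle cobordisms connecting the knots $K, K'$ to the links $L, L'$.

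First, I would verify that $L$ and $L'$ are each two-component links. Observe that $K = K_{0,0}$ is the oriented resolution of the positive crossing of $L = K_{1,0}$ that sits in the top twist region. Since an oriented resolution changes the component count of a link by exactly one, the fact that $K$ is a knot forces $L$ to have precisely two components, with the two strands meeting at the crossing lying in distinct components of $L$. The analogous argument, with $K$ obtained as the oriented resolution of the bottom crossing of $L' = K_{0,1}$, shows that $L'$ is also a two-component link.

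Next, using the identifications $L = K_{1,0} = K'_{0,1}$ and $L' = K_{0,1} = K'_{1,0}$, I would recognize four elementary merge saddles: resolving the top or bottom crossing of $L$ recovers $K$ or $K'$, respectively, and resolving the bottom or top crossing of $L'$ recovers $K$ or $K'$, respectively. Each such saddle merges the two components of the source link into one, so Lemma \ref{lemma:tau-facts} applies and yields
\[
\tau(K),\, \tau(K') \,\in\, \tau(L) \cap \tau(L').
\]

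Finally, since each of $L$ and $L'$ has two components, the $\tau$-sets $\tau(L)$ and $\tau(L')$ each consist of exactly two elements. If $\tau(K) = \tau(K')$, the first alternative of the proposition holds. Otherwise $\{\tau(K), \tau(K')\}$ is a two-element subset of both $\tau(L)$ and $\tau(L')$, which forces $\tau(L) = \{\tau(K), \tau(K')\} = \tau(L')$. The only step requiring genuine care is confirming that each of the four saddles really is a merge rather than a split, and this is exactly what the component-counting argument of the first step provides.
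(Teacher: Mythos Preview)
Your proof is correct and follows essentially the same approach as the paper: both arguments produce four elementary merge cobordisms from $L$ and $L'$ to $K$ and $K'$, apply Lemma \ref{lemma:tau-facts} to obtain $\tau(K),\tau(K')\in\tau(L)\cap\tau(L')$, and conclude by noting that each $\tau$-set has at most two elements. The only cosmetic difference is that you realize all four saddles as \emph{resolutions} via the dual descriptions $L = K_{1,0} = K'_{0,1}$ and $L' = K_{0,1} = K'_{1,0}$, whereas the paper describes some of them as introducing a crossing; and a tiny imprecision---$\tau(L)$ and $\tau(L')$ have \emph{at most} two elements rather than exactly two---does not affect your argument.
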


\begin{proof}
Resolving a positive crossing gives an elementary merge cobordism from $L = K_{1,0}$ to $K_{2,0}$.  Introducing a negative crossing gives an elementary merge cobordism from $L_n = K_{1,0}$ to $K_{1,-1} = K'$.  There are similar elementary merge cobordisms from $L'$ to $K$ and $K'$.  Thus, by Lemma \ref{lemma:tau-facts},
\[\tau(K),\tau(K') \in \tau(L) \qquad \tau(K),\tau(K') \in \tau(L')\]
Moreover, the sets $\tau(L)$ and $\tau(L')$ have at most 2 elements.  Thus, if $\tau(K) \neq \tau(K')$ then $\tau(L) = \tau(L') = \{\tau(K),\tau(K')\}$.  
\end{proof}

\bibliographystyle{alpha}
\nocite{*}
\bibliography{References}

\begin{thebibliography}{CDGW}

\bibitem[BCG]{py-hfk}
John~A. Baldwin, Marc Culler, and William~D. Gilliam.
\newblock py{\_}hfk.
\newblock Available at \url{https://bitbucket.org/t3m/py\_hfk/src}
  (11/08/2016).

\bibitem[BG12]{BG-computations}
John~A. Baldwin and William~D. Gillam.
\newblock Computations of {H}eegaard-{F}loer knot homology.
\newblock {\em J. Knot Theory Ramifications}, 21(8):1250075, 65, 2012.

\bibitem[BL12]{BL-spanning}
John~A. Baldwin and Adam~Simon Levine.
\newblock A combinatorial spanning tree model for knot {F}loer homology.
\newblock {\em Adv. Math.}, 231(3-4):1886--1939, 2012.

\bibitem[BM]{BM-twist-families}
Kenneth~L. Baker and Kimihiko Motegi.
\newblock Twist families of {L}-space knots, their genera, and {S}eifert
  surgeries.

\bibitem[CDGW]{SnapPy}
Marc Culler, Nathan~M. Dunfield, Matthias Goerner, and Jeffrey~R. Weeks.
\newblock Snap{P}y, a computer program for studying the geometry and topology
  of $3$-manifolds.
\newblock Available at \url{http://snappy.computop.org} (21/07/2016).

\bibitem[CK09]{Champ-QA}
Abhijit Champanerkar and Ilya Kofman.
\newblock Twisting quasi-alternating links.
\newblock {\em Proc. Amer. Math. Soc.}, 137(7):2451--2458, 2009.

\bibitem[Coc85]{Cochran}
Tim~D. Cochran.
\newblock Geometric invariants of link cobordism.
\newblock {\em Comment. Math. Helv.}, 60(2):291--311, 1985.

\bibitem[CR89]{Cochran-Ruberman}
Tim~D. Cochran and Daniel Ruberman.
\newblock Invariants of tangles.
\newblock {\em Math. Proc. Cambridge Philos. Soc.}, 105(2):299--306, 1989.

\bibitem[Gab86]{Gabai-genera}
David Gabai.
\newblock Genera of the arborescent links.
\newblock {\em Mem. Amer. Math. Soc.}, 59(339):i--viii and 1--98, 1986.

\bibitem[Gre10]{Greene}
Joshua Greene.
\newblock Homologically thin, non-quasi-alternating links.
\newblock {\em Math. Res. Lett.}, 17(1):39--49, 2010.

\bibitem[Gre12]{Greene-alternating}
Joshua~Evan Greene.
\newblock Conway mutation and alternating links.
\newblock In {\em Proceedings of the {G}\"okova {G}eometry-{T}opology
  {C}onference 2011}, pages 31--41. Int. Press, Somerville, MA, 2012.

\bibitem[Hed05]{Hedden-Cables1}
Matthew Hedden.
\newblock On knot {F}loer homology and cabling.
\newblock {\em Algebr. Geom. Topol.}, 5:1197--1222, 2005.

\bibitem[Hed09]{Hedden-Cables2}
Matthew Hedden.
\newblock On knot {F}loer homology and cabling. {II}.
\newblock {\em Int. Math. Res. Not. IMRN}, (12):2248--2274, 2009.

\bibitem[HW]{HW-GB}
Matthew Hedden and Liam Watson.
\newblock On the geography and botany problem of knot floer homology.

\bibitem[Lob14]{Lobb}
Andrew Lobb.
\newblock 2-strand twisting and knots with identical quantum knot homologies.
\newblock {\em Geom. Topol.}, 18(2):873--895, 2014.

\bibitem[MO08]{MO-QA}
Ciprian Manolescu and Peter Ozsv{\'a}th.
\newblock On the {K}hovanov and knot {F}loer homologies of quasi-alternating
  links.
\newblock In {\em Proceedings of {G}\"okova {G}eometry-{T}opology {C}onference
  2007}, pages 60--81. G\"okova Geometry/Topology Conference (GGT), G\"okova,
  2008.

\bibitem[MS15]{Moore-Starkston}
Allison~H. Moore and Laura Starkston.
\newblock Genus-two mutant knots with the same dimension in knot {F}loer and
  {K}hovanov homologies.
\newblock {\em Algebr. Geom. Topol.}, 15(1):43--63, 2015.

\bibitem[OS]{OS-skein}
Peter~S. Ozsv{\'a}th and Zolt{\'a}n Szab{\'o}.
\newblock On the skein exact sequence for knot {F}loer homology.

\bibitem[OS03]{OS-4ball}
Peter Ozsv{\'a}th and Zolt{\'a}n Szab{\'o}.
\newblock Knot {F}loer homology and the four-ball genus.
\newblock {\em Geom. Topol.}, 7:615--639, 2003.

\bibitem[OS04a]{OS-HFK}
Peter Ozsv{\'a}th and Zolt{\'a}n Szab{\'o}.
\newblock Holomorphic disks and knot invariants.
\newblock {\em Adv. Math.}, 186(1):58--116, 2004.

\bibitem[OS04b]{OS-HF}
Peter Ozsv{\'a}th and Zolt{\'a}n Szab{\'o}.
\newblock Holomorphic disks and topological invariants for closed
  three-manifolds.
\newblock {\em Ann. of Math. (2)}, 159(3):1027--1158, 2004.

\bibitem[OS04c]{OS-mutation}
Peter Ozsv{\'a}th and Zolt{\'a}n Szab{\'o}.
\newblock Knot {F}loer homology, genus bounds, and mutation.
\newblock {\em Topology Appl.}, 141(1-3):59--85, 2004.

\bibitem[OS08]{OS-HFL}
Peter Ozsv{\'a}th and Zolt{\'a}n Szab{\'o}.
\newblock Holomorphic disks, link invariants and the multi-variable {A}lexander
  polynomial.
\newblock {\em Algebr. Geom. Topol.}, 8(2):615--692, 2008.

\bibitem[OSS15]{OSS-book}
Peter~S. Ozsv{\'a}th, Andr{\'a}s~I. Stipsicz, and Zolt{\'a}n Szab{\'o}.
\newblock {\em Grid homology for knots and links}, volume 208 of {\em
  Mathematical Surveys and Monographs}.
\newblock American Mathematical Society, Providence, RI, 2015.

\bibitem[Ras03]{Rasmussen}
Jacob~Andrew Rasmussen.
\newblock {\em Floer homology and knot complements}.
\newblock ProQuest LLC, Ann Arbor, MI, 2003.
\newblock Thesis (Ph.D.)--Harvard University.

\end{thebibliography}


\end{document}